\theoremstyle{plain}
\newtheorem{Th}{Theorem}[section]
\newtheorem{Lem}[Th]{Lemma}
\newtheorem{Prop}[Th]{Proposition}
\theoremstyle{definition}
\newtheorem{Ex}[Th]{Example}
\newtheorem{Def}[Th]{Definition}
 \date{ }
\begin{document}
\title{Some results on core EP Drazin matrices and partial isometries}
\author{Gholamreza Aghamollaei\footnote{Corresponding author. Department of Pure Mathematics,
Faculty of Mathematics and Computer, Shahid Bahonar University of Kerman, Kerman, Iran. Email: aghamollaei@uk.ac.ir, aghamollaei1976@gmail.com}
\footnote{Mahani Math Center, Afzalipour Research Institute, Shahid Bahonar University of Kerman, Kerman, Iran.}, Mahdiyeh Mortezaei\footnote{Department
of Pure Mathematics, Faculty of Mathematics and Computer, Shahid Bahonar University of Kerman,
Kerman, Iran. Email: mahdiyehmortezaei@math.uk.ac.ir, mahdiyehmortezaei@gmail.com}, Dijana Mosi\'c\footnote{Faculty of Sciences and Mathematics, University of Ni\v s, Ni\v s, Serbia. Email: dijana@pmf.ni.ac.rs} \\ and N\'estor Thome\footnote{Instituto Universitario de Matem\'atica Multidisciplinar,
Universitat Polit\`ecnica de Val\`encia, Val\`encia 46022, Spain. Email: njthome@mat.upv.es}}

\maketitle{} {\footnotesize {\bf Abstract.}
}In this paper, by using the core EP inverse and the Drazin inverse which are two well known generalized inverses,
a new class of matrices entitled \textit{core EP Drazin matrices} (shortly, $CEPD$ matrices) is introduced.
This class contains the set of all EP matrices and also the set of normal matrices. Some algebraic properties of these matrices are also
investigated. Moreover, some results about the Drazin inverse and the core EP inverse of partial isometries are derived, and using them,
some conditions for which partial isometries are CEPD, are obtained. To illustrate the main results, some numerical examples are given.

\noindent{\footnotesize {\bf Keywords}: Core EP inverse, Drazin inverse, Moore-Penrose inverse, partial isometry, index.}\\
\noindent{\footnotesize {\bf AMS Subject Classification 2020:}
15A09, 15A10. }

%%%%%%%%%%%%%%%%%%%%%%%%%%%%%%%%%%%%%%%%%%%%%%%%%%%%%%%%%
%%%%%%%%%%%%%%%%%%%%%%%%%%%%%%%%%%%%%%%%%%%%%%%%%%%%%%%%%

 \noindent
\section{Introduction}
Let $ \mathbb{M}_{m \times n} (\mathbb{C})$ be the vector space of $ m \times n $ matrices over the field $ \mathbb{C} $ of complex numbers. The algebra $\mathbb{M}_n(\mathbb{C})$ is used instead of $\mathbb{M}_{n \times n}(\mathbb{C})$.

 For $ A\in\mathbb{M}_{m \times n}(\mathbb{C})$, the Moore-Penrose inverse of $A$ is a matrix
$ X\in\mathbb{M}_{n \times m}(\mathbb{C})$ that satisfies the following properties:
\begin{equation}\label{MP}
X A X = X, \ \ \ A X A = A, \ \ \
(A X)^\ast = A X, \ \ \
(X A)^\ast = X A,
\end{equation}
where $^{\ast}$ denotes the conjugate transpose of a matrix.
Such matrix $X$ always exists, is unique and is denoted by $A^\dagger$ \cite{P}.

 Next, for a square matrix $ A\in\mathbb{M}_n(\mathbb{C})$ with index $k$,
we recall some other known generalized inverses of $A$. It should be highlighted that the index of $A$, represented as ${ \rm ind}(A)$, refers to the smallest non-negative integer $k$ for which ${\rm rank}(A^k) = {\rm rank}(A^{k+1})$.\\
$\bullet$ The Drazin inverse of $A$ \cite{Dr} is a matrix $X\in\mathbb{M}_n(\mathbb{C})$ which is characterized by the following properties:
\begin{equation}\label{D}
A X = X A, \ \ \ X A X=X, \ \ \
X A^{k+1} = A^k. \\
\end{equation}
This matrix $X$ always exists, is unique and is denoted by $A^D$. If ${\rm ind}(A)\leq 1 $, then the Drazin inverse of $A$ is
called the group inverse of $A$ and is denoted by $A^\#$. \\
$\bullet$ The DMP (Drazin Moore-Penrose) inverse of $A$ \cite{MT} is the unique matrix $ X:= A^{D,\dagger} \in\mathbb{M}_n(\mathbb{C})$
which satisfies
\begin{center}
$X A = A^D A,\ \ \ \ \ X A X = X,\ \ \ \ \
A^k X = A^k A^\dagger.$
\end{center}
$\bullet$ The MPD (Moore-Penrose Drazin) inverse of $A$ \cite{MT} is the unique matrix $ X:=A^{\dagger,D} \in\mathbb{M}_n(\mathbb{C})$ which satisfies
\begin{center}
$A X = A A^D, \ \ \ \ \ X A X=X, \ \ \ \ \
X A^k = A^\dagger A^k$.
\end{center}
$\bullet$ The matrix $A$ can be written as $ A = A_1 + A_2 $ (the core nilpotent decomposition of $A$) \cite[Theorem 2.2.21]{MBM},
where $A_1 = A A^D A $ is the core part of $A$ with property ${\rm ind}(A_1)\leq 1$, $ A_2 = A-A A^D A $ is the nilpotent part of $A$, and $A_1 A_2 = 0 = A_2 A_1.$ The CMP (Core Moore-Penrose) inverse
of $A$ \cite{MS} is the unique matrix $ X:=A^{c\dagger} \in\mathbb{M}_n(\mathbb{C})$ that satisfies the following properties:
\begin{center}
$A X A = A_1, \ \ X A X = X, \ \
X A = A^\dagger A_1, \ \ AX = A_1 A^\dagger$.
\end{center}
$\bullet$ The core EP inverse of $A$ \cite{PM} is the unique matrix $ X:=A^{\scriptsize\textcircled{$\dagger$}} \in\mathbb{M}_n(\mathbb{C})$ which satisfies the following equations:
\begin{equation}\label{CEP}
X A X = X,\ \ \ \ \
\mathcal{R}(X) = \mathcal{R}(A^k) = \mathcal{R}(X^\ast),
\end{equation}
where $\mathcal{R}(.) $ is the range (column) space of a matrix.

 Let $ A\in\mathbb{M}_n(\mathbb{C})$. The matrix $A$ is EP (Equal Projection) if $A A^\dagger = A^\dagger A $. In 2016, Malik, Rueda and Thome in \cite{MRT} introduced $k-$EP matrices, and in 2018, Mehdipour and Salemi in \cite{MS} introduced core EP matrices. Note that
$A$ is $k-$EP, where $k = {\rm ind}(A)$, if $ A^k A^\dagger = A^\dagger A^k $, and it is called a
core EP matrix if $A^\dagger A_{1} = A_{1} A^\dagger$, where $A_1$ is the core part of $A$. Moreover,
$A$ is called SD (star-dagger) if $A^\ast A^\dagger = A^\dagger A^\ast $ \cite{HS}, and
it is said to be a partial isometry if $A^\dagger = A^\ast $, or equivalently, $A A^\ast A = A$ \cite{HSS}.

Generalized inverses have many applications in various areas; see for example \cite{BG, CM, CMR, IW, MBM}. One of the important generalized inverses is the Drazin inverse.
It has numerous applications in areas such as multibody system dynamics \cite{SFR},
distinguishability of linear descriptor control systems \cite{DY}, and the theory of finite Markov chains \cite{CM,ZCh}.
In addition, the Drazin inverse is used in linear systems of differential equations with constant coefficients, which frequently occur in electrical circuits and are very important in electronic engineering \cite[Chapters 5 and 9]{CM}.
Another important type of generalized inverse is the core EP inverse.
It has many applications in areas such as
constrained matrix approximation problems \cite{MSK}, semistable matrices \cite{GCP}, and solving certain systems of linear equations
\cite{JW,MSM}.
Considering these applications, one of the aims of this paper is to introduce and study a new class of matrices,
referred to as \textit{core EP Drazin matrices}. Another goal of this paper is to focus on partial isometries and some of their
generalized inverses. Note that partial isometries have many applications in various fields, such as problems of best approximation \cite{ACh}, medical imaging (e.g., MRI) \cite{AD}, quantum holonomy \cite{U}, and the polar decomposition, which has numerous applications in areas such as light scattering in particle systems \cite{SAM}.

This paper is organized as follows. In Section $2$, we present some preliminaries and essential properties of the core EP inverse, DMP inverse, MPD inverse, and CMP inverse, which we frequently use in our investigation.
In Section $3$, by utilizing the core EP inverse and the Drazin inverse, we introduce core EP Drazin (CEPD) matrices. In this section,
we also investigate several algebraic properties of these matrices. In Section $4$, we consider
partial isometries, and we find a necessary and sufficient condition under which the Drazin
inverse, core EP inverse, DMP inverse, and MPD inverse of a partial isometry are also partially isometric.
Furthermore, we present some results for partial isometries when they are CEPD.
In Section $5$, we discuss applications of CEPD matrices and partial isometries in solving systems of linear equations.
To illustrate the main results of the paper, we
provide numerical examples.

%%%%%%%%%%%%%%%%%%%%%%%%%%%%%%%%%%%%%%%%%%%%%%%%%%
%%%%%%%%%%%%%%%%%%%%%%%%%%%%%%%%%%%%%%%%%%%%%%%%%%

\noindent
\section{Preliminaries}

 In this section, we present some lemmas about the Drazin inverse, core EP inverse, MPD inverse, DMP inverse, and CMP inverse which are useful in our discussion.

\begin{Lem}\label{DMP 1}
For $ A\in\mathbb{M}_n(\mathbb{C})$ with the core part $A_1$, the following assertions are true:
\begin{enumerate}[(i)]
\item (\cite[Theorem 2.2]{MT}) $ A^{D,\dagger} = A^D A A^\dagger $;
\item (\cite{MT}) $ A^{\dagger,D} = A^\dagger A A^D$;
\item (\cite[Proposition 2.2(1)]{MS}) $ A^\dagger A A^D A A^\dagger = A^{c\dagger}$;
\item (\cite[Theorem 3.2]{KDY}) $ A^{D,\dagger} A^D = (A^D)^2$;
\item (\cite[Theorem 3.5]{KDY}) $ A^{D,\dagger} A = A A^D $;
\item (\cite[Ex. 27 in p. 166]{BG}) $(A^D)^\# = A^2 A^D = A_1$.
\end{enumerate}
\end{Lem}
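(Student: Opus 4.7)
The plan is to verify each of the six identities by substituting the candidate formula into the defining axioms of the relevant generalized inverse and then invoking uniqueness of that inverse. All six parts rest on a small toolkit of standard identities that I would use repeatedly: $AA^D=A^DA$, $A^DAA^D=A^D$, $A^kA^DA=A^k$ for $k=\operatorname{ind}(A)$, $AA^\dagger A=A$, and $A^\dagger AA^\dagger=A^\dagger$.

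For (i), I would set $X:=A^DAA^\dagger$ and check the three DMP axioms in turn: $XA=A^DA$ reduces to $A^D(AA^\dagger A)=A^DA$; the equation $XAX=X$ uses the same cancellation together with $A^DAA^D=A^D$; and $A^kX=A^kA^\dagger$ follows from $A^kA^DA=A^k$. Part (ii) is symmetric, checking the three MPD axioms against $X:=A^\dagger AA^D$. Part (v) is then immediate: the first DMP axiom gives $A^{D,\dagger}A=A^DA$, which equals $AA^D$ by commutativity.

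For (iii), I would plug $X:=A^\dagger AA^DAA^\dagger$ into the four defining relations of the CMP inverse, writing $A_1=AA^DA$ throughout. The relations $AXA=A_1$, $XA=A^\dagger A_1$, and $AX=A_1A^\dagger$ collapse by repeated use of $AA^\dagger A=A$; the equation $XAX=X$ then follows by starting from $XA=A^\dagger A_1$ and applying $AA^\dagger A=A$ together with $A^DAA^D=A^D$. For (iv), starting from (i) I would write $A^{D,\dagger}A^D=A^D(AA^\dagger)A^D$. The crux here is the range inclusion $\mathcal{R}(A^D)=\mathcal{R}(A^k)\subseteq\mathcal{R}(A)$, which makes the orthogonal projector $AA^\dagger$ fix $A^D$, so $AA^\dagger A^D=A^D$; substitution gives $A^{D,\dagger}A^D=A^D\cdot A^D=(A^D)^2$.

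For (vi), I would verify that $W:=A^2A^D$ satisfies the three group inverse axioms for $A^D$. Using the commutation $AA^D=A^DA$ and the idempotency of $AA^D$, both $WA^D$ and $A^DW$ reduce to $AA^D$; the identity $A^DAA^D=A^D$ then yields $WA^DW=W$ and $A^DWA^D=A^D$. The equality $A^2A^D=A_1$ is just $A(AA^D)=A(A^DA)=(AA^D)A=A_1$ rewritten with commutativity. The only mildly subtle step in the whole lemma is the range argument in (iv); part (iii) is the most computation-heavy because four axioms must be checked, but no single step is conceptually hard.
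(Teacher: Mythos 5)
Your verification is correct in all six parts, and each step checks out: the candidate matrices do satisfy the defining systems for the DMP, MPD, CMP and group inverses, and uniqueness (asserted in the definitions) then gives the equalities. Note, however, that the paper does not prove this lemma at all — it is presented as a compendium of known results, each item cited to the literature (Malik--Thome, Mehdipour--Salemi, Zuo et al., Ben-Israel--Greville) — so your axioms-plus-uniqueness argument is a self-contained replacement for those citations rather than a variant of an argument in the text; in substance it reproduces the standard proofs from the cited sources. Two small remarks: in (iv) the range/projector argument is sound ($\mathcal{R}(A^D)=\mathcal{R}(A^k)\subseteq\mathcal{R}(A)$, so $AA^\dagger$ fixes $A^D$), but it can be bypassed by the one-line computation $AA^\dagger A^D = AA^\dagger A (A^D)^2 = A(A^D)^2 = A^D$; and in (vi), exhibiting $W=A^2A^D$ satisfying the three group-inverse equations simultaneously proves that $(A^D)^\#$ exists and equals $W$, which is worth saying explicitly since existence of the group inverse of $A^D$ is part of the claim.
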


\begin{Lem}\label{CEP 1}
For $ A\in\mathbb{M}_n(\mathbb{C})$ with $ind(A) = k $,
the following assertions are true:
\begin{enumerate}[(i)]
\item (\cite[Lemma 2.3]{FLT1}) $A^{\scriptsize\textcircled{$\dagger$}} A^{k+1} = A^k$;
\item (\cite[Lemma 2.3]{FLT1}) $ A A^{\scriptsize\textcircled{$\dagger$}}$ is Hermitian;
\item (\cite[Lemma 2.6]{ZC}) $ A^{\scriptsize\textcircled{$\dagger$}} = A^D A^m (A^m)^\dagger $ for any positive integer $m\geq k$;
\item (\cite[Lemma 2.6]{ZC}) $A^{\scriptsize\textcircled{$\dagger$}} = A^k (A^{k+1})^\dagger$;
\item (\cite[Theorem 3.2]{FLT}) $A^{\scriptsize\textcircled{$\dagger$}} A^k = A^D A^k$;
\item (\cite[Theorem 2.7]{FLT1}) $A^{\scriptsize\textcircled{$\dagger$}} =(A^{k+1} (A^k)^\dagger)^\dagger $;
\item (\cite[Theorem 2.9]{FLT1}) $A^{\scriptsize\textcircled{$\dagger$}} $ is $EP$;
\item (\cite[Theorem 2.9]{FLT1}) $ A (A^{\scriptsize\textcircled{$\dagger$}})^2 = A^{\scriptsize\textcircled{$\dagger$}}$;
\item (\cite[Corollary 3.4]{W}) $ A A^{\scriptsize\textcircled{$\dagger$}} = A^k (A^k)^\dagger $;
\item (\cite[Theorem 4.1]{ZCH}) $ A^D A^{\scriptsize\textcircled{$\dagger$}} = (A^{\scriptsize\textcircled{$\dagger$}})^2 $.
\end{enumerate}
\end{Lem}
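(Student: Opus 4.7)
The plan is to treat this ten-part lemma as a systematic development from a single master formula. Rather than invoke each cited reference separately, my strategy is to first establish item (iv), $A^{\scriptsize\textcircled{$\dagger$}} = A^k (A^{k+1})^\dagger$, and then use it as the main computational workhorse for proving most of the other items. The advantage is that (iv) gives a concrete closed-form expression in terms of powers of $A$ and Moore--Penrose operations, which behave predictably under multiplication and conjugation, whereas the abstract definition (\ref{CEP}) is awkward to manipulate directly.

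For item (iv) itself, I would appeal to uniqueness in the definition: it suffices to verify that $X := A^k(A^{k+1})^\dagger$ satisfies $XAX = X$ together with the range equalities $\mathcal{R}(X) = \mathcal{R}(X^*) = \mathcal{R}(A^k)$. The outer-inverse identity reduces to the standard Moore--Penrose property $(A^{k+1})^\dagger A^{k+1} (A^{k+1})^\dagger = (A^{k+1})^\dagger$. The range conditions rely crucially on the index equality $\mathcal{R}(A^k) = \mathcal{R}(A^{k+1})$, equivalently $\mathcal{N}(A^k) = \mathcal{N}(A^{k+1})$, which allows one to write $A^k = A^{k+1}W$ for some $W$ and thereby push the projector $A^{k+1}(A^{k+1})^\dagger$ through without loss. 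Once (iv) is in hand, items (ii) and (ix) fall out immediately from $AA^{\scriptsize\textcircled{$\dagger$}} = A^{k+1}(A^{k+1})^\dagger$, which is the orthogonal projector onto $\mathcal{R}(A^k)$ and is therefore Hermitian and equal to $A^k(A^k)^\dagger$. Item (i) reduces to $A^k(A^{k+1})^\dagger A^{k+1} = A^k$, using that the projector $(A^{k+1})^\dagger A^{k+1}$ fixes the columns of $(A^k)^*$ by the index condition. Item (vii) is by definition of EP given the symmetric range axiom, and (viii) follows from $XAX = X$ combined with the observation that $AX$ fixes $\mathcal{R}(A^k) = \mathcal{R}(X)$. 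Items (iii) and (v) unfold by combining (iv) with standard Drazin identities such as $A^D A^{k+1} = A^k$ and $\mathcal{R}(A^m) = \mathcal{R}(A^k)$ for $m \geq k$.

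The main obstacles will be items (vi) and (x). Item (vi), $A^{\scriptsize\textcircled{$\dagger$}} = (A^{k+1}(A^k)^\dagger)^\dagger$, requires verifying all four Moore--Penrose equations for the pair $(A^{k+1}(A^k)^\dagger,\; A^{\scriptsize\textcircled{$\dagger$}})$; the outer and inner equations are straightforward via (iv), but the two symmetry conditions need Hermiticity of $AA^{\scriptsize\textcircled{$\dagger$}}$ (item (ii)) together with the interplay $A^{k+1}(A^k)^\dagger A^k = A^{k+1}$ and the index assumption. Item (x), $A^D A^{\scriptsize\textcircled{$\dagger$}} = (A^{\scriptsize\textcircled{$\dagger$}})^2$, I would prove by using (iv) to rewrite $A^D A^{\scriptsize\textcircled{$\dagger$}} = A^D A^k (A^{k+1})^\dagger$ and comparing with $(A^{\scriptsize\textcircled{$\dagger$}})^2 = A^k(A^{k+1})^\dagger A^k (A^{k+1})^\dagger$, then collapsing both sides to a common expression via the Drazin identity $A^D A^k = A^{k-1}A^D \cdot A = $ the core part divided appropriately, and the projector property for $A^{k+1}(A^{k+1})^\dagger$ acting on $\mathcal{R}(A^k)$. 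These two items are where the interaction between the Drazin and Moore--Penrose sides of $A^{\scriptsize\textcircled{$\dagger$}}$ is most technical, and where I expect the bookkeeping around indices and ranges to be most delicate.
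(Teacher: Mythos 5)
Your proposal is essentially correct, but be aware that the paper itself does not prove this lemma: every item is justified purely by citation (Ferreyra--Levis--Thome, Zhou--Chen, Wang, Zuo--Cheng), so what you have written is a self-contained replacement for the literature rather than a variant of an argument in the text. Your route---verify the defining conditions (\ref{CEP}) for $X=A^k(A^{k+1})^\dagger$ and then derive everything else from that one representation---does work and is an attractive unification: $XAX=X$ collapses via $(A^{k+1})^\dagger A^{k+1}(A^{k+1})^\dagger=(A^{k+1})^\dagger$; the index equality $\mathcal{N}(A^{k+1})=\mathcal{N}(A^k)$ gives $XA^{k+1}=A^k$ (item (i)) and hence $\mathcal{R}(X)=\mathcal{R}(A^k)$; items (ii), (ix) follow because $AX=A^{k+1}(A^{k+1})^\dagger=A^k(A^k)^\dagger$ is the orthogonal projector onto $\mathcal{R}(A^k)$; (vii) is the characterization ``EP $\Leftrightarrow\mathcal{R}(X)=\mathcal{R}(X^\ast)$''; (viii) is $AX^2=PX=X$; and (iii), (v) follow from $A^DAX=X$ and $A^m(A^m)^\dagger=A^k(A^k)^\dagger$ for $m\geq k$, your appeal to uniqueness of the matrix satisfying (\ref{CEP}) being legitimate since the paper asserts it. Two spots deserve tightening. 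First, for the range axiom $\mathcal{R}(X^\ast)=\mathcal{R}(A^k)$ you only get the inclusion $\mathcal{R}(X^\ast)\subseteq\mathcal{R}(((A^{k+1})^\ast)^\dagger)=\mathcal{R}(A^{k+1})=\mathcal{R}(A^k)$ directly; equality should be finished by the rank count $\mathrm{rank}(X^\ast)=\mathrm{rank}(X)=\mathrm{rank}(A^k)$, which in turn uses $\mathcal{R}(X)=\mathcal{R}(A^k)$ proved first. Second, your sketch of (x) (``the core part divided appropriately'') is vague as stated, but it is repaired in one line once (v) is available: $(A^{\scriptsize\textcircled{$\dagger$}})^2=(A^{\scriptsize\textcircled{$\dagger$}}A^k)(A^{k+1})^\dagger=(A^DA^k)(A^{k+1})^\dagger=A^DA^{\scriptsize\textcircled{$\dagger$}}$, using (iv) on both ends. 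With those repairs your plan yields complete proofs of all ten items, whereas the paper simply imports them; the cost is that you must carry the null-space/range bookkeeping explicitly, which the cited sources have already done.
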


For $ A\in\mathbb{M}_n(\mathbb{C})$ with ${\rm rank}(A) = r > 0 $, the Hartwing-Spindelb\"{o}ck decomposition of $A$ \cite{HS} is given by:
\begin{equation}\label{HS}
A = U \left [
\begin{array} {cc}
\Sigma K & \Sigma L \\
0 & 0
\end{array} \right ] U^\ast,
\end{equation}
where $ U\in \mathbb{M}_n(\mathbb{C}) $ is unitary, $\Sigma = diag(\sigma_{1} I_{n_{1}}, \sigma_{2} I_{n_{2}}, \ldots , \sigma_{l} I_{n_{l}}) $ with $ \sigma_{1} > \sigma_{2} > \cdots > \sigma_{l} > 0 $ which are the singular values
of $A$, $ n_{1} + n_{2} + \cdots + n_{l} = r$, $ K\in \mathbb{M}_r(\mathbb{C}) $ and $ L\in \mathbb{M}_{r \times (n-r)}(\mathbb{C}) $ satisfy
$ K K^\ast + L L^\ast = I_{r} $. It is known that ${\rm ind}(\Sigma K) = {\rm ind}(A)-1$ \cite[Lemma 2.8]{MT}. Now, we state the following lemma. Recall that for $ A\in\mathbb{M}_n(\mathbb{C})$, the Drazin-Star matrix of $A$ (or the Drazin-Star inverse of $(A^\dagger)^\ast$) is defined,
see e.g., \cite[Def. 2.1]{D}, as $A^{D,\ast} := A^D A A^\ast$.

\begin{Lem}\label{HSD 2}
For $ A\in\mathbb{M}_n(\mathbb{C})$ with the Hartwing-Spindelb\"{o}ck decomposition as in (\ref{HS}),
the following assertions are true:
\begin{enumerate}[(i)]
\item (\cite[Relation (14)]{MT}) $ A^D = U \begin{bmatrix}
(\Sigma K)^D & ((\Sigma K)^D)^2 \Sigma L \\
0 & 0 \\
\end{bmatrix} U^\ast $;
\item (\cite[Theorem 3.2.]{FLT1}) $ A^{\scriptsize\textcircled{$\dagger$}} = U \begin{bmatrix}
(\Sigma K)^{\scriptsize\textcircled{$\dagger$}} & 0 \\
0 & 0 \\
\end{bmatrix} U^\ast $;
\item (\cite[Theorem 2.5]{MT}) $A^{D,\dagger} = U \begin{bmatrix}
(\Sigma K)^D & 0 \\
0 & 0 \\
\end{bmatrix} U^\ast $;
\item (\cite[Theorem 2.7]{D}) $A^{D,\ast} = U \begin{bmatrix}
(\Sigma K)^D \Sigma \Sigma^\ast & 0 \\
0 & 0 \\
\end{bmatrix} U^\ast $.
\end{enumerate}
\end{Lem}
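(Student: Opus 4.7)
The plan is to reduce all four parts to computations on the single block matrix $B = \begin{bmatrix} P & Q \\ 0 & 0 \end{bmatrix}$, with $P := \Sigma K$ and $Q := \Sigma L$, by exploiting unitary-similarity equivariance of each generalized inverse. If $A = U B U^*$ with $U$ unitary, then $A^D = U B^D U^*$, $A^\dagger = U B^\dagger U^*$, and $A^* = U B^* U^*$, and hence also $A^{\scriptsize\textcircled{$\dagger$}}$, $A^{D,\dagger}$, and $A^{D,*}$ conjugate nicely by $U$. Throughout I would rely on the index identity $\mathrm{ind}(P) = k - 1$, where $k = \mathrm{ind}(A)$, as cited just before the lemma.

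For part (i), I would verify directly that $X := \begin{bmatrix} P^D & (P^D)^2 Q \\ 0 & 0 \end{bmatrix}$ satisfies the three Drazin axioms (\ref{D}) for $B$. A one-step induction yields $B^j = \begin{bmatrix} P^j & P^{j-1}Q \\ 0 & 0 \end{bmatrix}$ for $j \geq 1$, so the commutation $XB = BX$, the absorption $XBX = X$, and the terminal relation $B^{k+1}X = B^k$ all collapse to standard Drazin identities for $P$ together with $P^k P^D = P^{k-1}$. Part (iii) is then immediate from Lemma \ref{DMP 1}(i): since $B$ has rank $r$ and its lower rows vanish, $B B^\dagger$ is the projector $\begin{bmatrix} I_r & 0 \\ 0 & 0 \end{bmatrix}$, and right-multiplying the block form of $B^D$ by this projector kills the upper-right entry. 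Part (iv) follows similarly from the definition $A^{D,*} = A^D A A^*$: the block product gives $B B^* = \begin{bmatrix} PP^* + QQ^* & 0 \\ 0 & 0 \end{bmatrix}$, the Hartwig-Spindelb\"{o}ck condition $KK^* + LL^* = I_r$ simplifies the $(1,1)$ entry to $\Sigma \Sigma^*$, and left multiplication by $B^D$ delivers the claim.

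The main obstacle is part (ii). I would verify the core EP characterization (\ref{CEP}) directly for the candidate $X_B := \begin{bmatrix} P^{\scriptsize\textcircled{$\dagger$}} & 0 \\ 0 & 0 \end{bmatrix}$. The identity $X_B B X_B = X_B$ reduces at once to absorption for $P^{\scriptsize\textcircled{$\dagger$}}$. The delicate step is the twin range equalities $\mathcal{R}(X_B) = \mathcal{R}(B^k) = \mathcal{R}(X_B^*)$: applying the core EP property to $P$ (whose index is $k-1$) gives $\mathcal{R}(P^{\scriptsize\textcircled{$\dagger$}}) = \mathcal{R}(P^{k-1}) = \mathcal{R}((P^{\scriptsize\textcircled{$\dagger$}})^*)$, so both outer ranges equal $\mathcal{R}(P^{k-1}) \oplus \{0\}$; meanwhile $\mathcal{R}(B^k) = \{(P^k x + P^{k-1} Q y, 0)^\top : x, y\}$ is visibly contained in this subspace, and the reverse inclusion requires exactly the equality $\mathcal{R}(P^{k-1}) = \mathcal{R}(P^k)$, which is the very definition of $\mathrm{ind}(P) = k-1$. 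An alternative route would use Lemma \ref{CEP 1}(iii) with $m = k$ to write $A^{\scriptsize\textcircled{$\dagger$}} = A^D A^k (A^k)^\dagger$, but identifying the projector $A^k (A^k)^\dagger$ onto $\mathcal{R}(A^k)$ in block form appears no shorter than the range argument above.
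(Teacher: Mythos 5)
Your verification is essentially correct, and it is worth noting that the paper itself offers no proof of this lemma at all: it is a preliminary result assembled by citation (parts (i) and (iii) from [MT], part (ii) from [FLT1, Theorem 3.2], part (iv) from [D, Theorem 2.7]), so your self-contained block-matrix argument is a different route only in the sense that the paper delegates the work to those sources, where essentially the same computations on $B=\begin{bmatrix} \Sigma K & \Sigma L \\ 0 & 0 \end{bmatrix}$ are carried out. Your reductions are sound: unitary equivariance of $^D$, $^\dagger$, $^\ast$ and of each defining system of equations justifies passing from $A$ to $B$; the induction $B^j=\begin{bmatrix} P^j & P^{j-1}Q \\ 0 & 0\end{bmatrix}$ together with $P^DP^{k+1}=P^k$ and $P^DP^k=P^{k-1}$ (valid because $\mathrm{ind}(P)=k-1$) gives (i) via uniqueness of the Drazin inverse; in (iii) the identification $BB^\dagger=\begin{bmatrix} I_r & 0 \\ 0 & 0\end{bmatrix}$ is correct (rank $r$ with columns confined to $\mathbb{C}^r\oplus\{0\}$ forces $\mathcal{R}(B)=\mathbb{C}^r\oplus\{0\}$, and $BB^\dagger$ is the orthogonal projector onto $\mathcal{R}(B)$); in (iv) the simplification $PP^\ast+QQ^\ast=\Sigma(KK^\ast+LL^\ast)\Sigma^\ast=\Sigma\Sigma^\ast$ is exactly right; and in (ii) your range argument correctly uses $\mathcal{R}(P^{\scriptsize\textcircled{$\dagger$}})=\mathcal{R}(P^{k-1})=\mathcal{R}((P^{\scriptsize\textcircled{$\dagger$}})^\ast)$ and the equality $\mathcal{R}(P^{k-1})=\mathcal{R}(P^k)$ coming from $\mathrm{ind}(P)=k-1$ (rank equality plus the automatic inclusion $\mathcal{R}(P^k)\subseteq\mathcal{R}(P^{k-1})$), after which uniqueness in (\ref{CEP}) finishes the claim. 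The only cosmetic caveat is the degenerate case $k\le 1$ (e.g.\ $\mathrm{ind}(A)=1$ makes $\Sigma K$ invertible, $\mathrm{ind}(A)=0$ makes the second block column empty), where the formulas hold trivially; a one-line remark covering it would make your write-up airtight. What your approach buys is independence from the four external references; what the paper's approach buys is brevity, since these block formulas are standard consequences of the Hartwig--Spindelb\"ock decomposition already recorded in the literature.
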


%%%%%%%%%%%%%%%%%%%%%%%%%%%%%%%%%%%%%%%%%%%%%%%%%%%%%%%%%%%
%%%%%%%%%%%%%%%%%%%%%%%%%%%%%%%%%%%%%%%%%%%%%%%%%%%%%%%%%%%%

\noindent
\section{Core EP Drazin matrices}

 At the beginning of this section, by using the Drazin inverse and the core EP inverse, we introduce core EP Drazin matrices as follows:

 \begin{Def}\label{CEPD}
A matrix $ A\in\mathbb{M}_n(\mathbb{C})$ is called \textit{core EP Drazin} (shortly, $CEPD$) matrix if
$A^{\scriptsize\textcircled{$\dagger$}} A^D = A^D A^{\scriptsize\textcircled{$\dagger$}}$.
\end{Def}

 In the following theorem, we give some equivalent conditions for CEPD matrices. For this, we recall that
the Drazin inverse $A^D$ double commutes with $A$ (i.e., $AB=BA$ implies that $A^DB=BA^D$) \cite{BG}.

 \begin{Th}\label{CEPD equivalent}
For $ A\in\mathbb{M}_n(\mathbb{C})$ with $ind(A) = k$ and the core part $A_1$, the following statements are equivalent:
\begin{enumerate}[(i)]
\item $A$ is CEPD;
\item $A_1 A^{\scriptsize\textcircled{$\dagger$}} = A^{\scriptsize\textcircled{$\dagger$}} A_1$;
\item $ A^DA^{k+1} (A^k)^\dagger=A^{k+1} (A^k)^\dagger A^D$;
\item $ A_1A^{k+1} (A^k)^\dagger=A^{k+1} (A^k)^\dagger A_1$;
\item $A^{k+2} (A^k)^\dagger = A A_1$;
\item $A^{\scriptsize\textcircled{$\dagger$}}= A^D$;
\item $(A^{\scriptsize\textcircled{$\dagger$}})^m=(A^D)^m$ for any positive integer $m\geq 2$;
\item $(A^{\scriptsize\textcircled{$\dagger$}})^m=(A^D)^m$ for some positive integer $m\geq 2$.
\end{enumerate}
\end{Th}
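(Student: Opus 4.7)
The plan is to single out (vi), which asserts $A^{\scriptsize\textcircled{$\dagger$}}=A^D$, as the strongest condition and to show that each of the remaining seven is equivalent to it. Two always-true identities will be the workhorses:
(I) $A_1 A^D=A^D A_1=AA^D$, from $A_1=A^2 A^D$, the commutativity $A^D A=AA^D$, and $AA^D\cdot A^D=A^D$; and
(II) $A_1A^{\scriptsize\textcircled{$\dagger$}}=AA^{\scriptsize\textcircled{$\dagger$}}$, since $A_1A^{\scriptsize\textcircled{$\dagger$}}=A^2 A^D A^{\scriptsize\textcircled{$\dagger$}}=A^2(A^{\scriptsize\textcircled{$\dagger$}})^2=A\cdot A(A^{\scriptsize\textcircled{$\dagger$}})^2=AA^{\scriptsize\textcircled{$\dagger$}}$ by Lemma~\ref{CEP 1}(x) and Lemma~\ref{CEP 1}(viii). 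Together with Lemma~\ref{CEP 1}(ix) rewritten as $A^{k+p}(A^k)^\dagger=A^{p+1}A^{\scriptsize\textcircled{$\dagger$}}$ for $p=0,1,2$, these let me express (iii), (iv), (v) in terms of $A$, $A^{\scriptsize\textcircled{$\dagger$}}$, $A^D$ alone, so that the implication (vi) $\Rightarrow$ (i)--(v), (vii), (viii) reduces to routine substitution.

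For the converses I would prove (i), (v), (viii) $\Rightarrow$ (vi) directly and then reduce (ii), (iii), (iv) to one of these. Under (i), Lemma~\ref{CEP 1}(x) upgrades the hypothesis to $A^{\scriptsize\textcircled{$\dagger$}} A^D=(A^{\scriptsize\textcircled{$\dagger$}})^2$; left multiplying by $A$ and applying Lemma~\ref{CEP 1}(viii), (ix) yields $A^{\scriptsize\textcircled{$\dagger$}}=A^k(A^k)^\dagger A^D$, and since iterating the identity $A^D=A(A^D)^2$ gives $A^D=A^k(A^D)^{k+1}\in\mathcal{R}(A^k)$, the orthogonal projector $A^k(A^k)^\dagger$ fixes $A^D$, forcing $A^{\scriptsize\textcircled{$\dagger$}}=A^D$. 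Under (v) the rewriting turns the hypothesis into $A^3 A^{\scriptsize\textcircled{$\dagger$}}=A^3 A^D$; left multiplication by $(A^D)^2$ (using $(A^D)^2 A^3=A_1$) combined with Claims (I)--(II) collapses this to $AA^{\scriptsize\textcircled{$\dagger$}}=AA^D$, and one more left multiplication by $A^D$, together with Lemma~\ref{CEP 1}(viii), (x) and $AA^D\cdot A^D=A^D$, produces $A^{\scriptsize\textcircled{$\dagger$}}=A^D$. Under (viii), left multiplying $(A^{\scriptsize\textcircled{$\dagger$}})^m=(A^D)^m$ by $A^{m-1}$ gives $A^{\scriptsize\textcircled{$\dagger$}}$ on the left (by iterating Lemma~\ref{CEP 1}(viii)) and $A^D$ on the right (from $A^D A=AA^D$ together with $AA^D\cdot A^D=A^D$).

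The remaining converses reduce to (i) or (ii). For (ii) $\Leftrightarrow$ (i) I would invoke the double commutant property of the group inverse: since ${\rm ind}(A_1)\leq 1$, $A_1$ is group invertible, and Lemma~\ref{DMP 1}(vi) (which states $(A^D)^\#=A_1$) gives $(A_1)^\#=A^D$, so a matrix commutes with $A_1$ if and only if it commutes with $(A_1)^\#=A^D$; taking $A^{\scriptsize\textcircled{$\dagger$}}$ as that matrix yields (ii) $\Leftrightarrow$ (i). For (iii) $\Rightarrow$ (i), the hypothesis becomes $AA^{\scriptsize\textcircled{$\dagger$}}=A^2 A^{\scriptsize\textcircled{$\dagger$}} A^D$; a left multiplication by $A^D$ combined with Lemma~\ref{CEP 1}(viii), (x) reduces it to $A^{\scriptsize\textcircled{$\dagger$}}=AA^{\scriptsize\textcircled{$\dagger$}} A^D$, and a further left multiplication by $A^D$ produces $A^D A^{\scriptsize\textcircled{$\dagger$}}=A^{\scriptsize\textcircled{$\dagger$}} A^D$. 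For (iv) $\Rightarrow$ (ii), a single left multiplication by $(A^D)^2$ reduces the rewritten hypothesis $A^3 A^{\scriptsize\textcircled{$\dagger$}}=A^2 A^{\scriptsize\textcircled{$\dagger$}} A^2 A^D$ to $AA^{\scriptsize\textcircled{$\dagger$}}=A^{\scriptsize\textcircled{$\dagger$}} A_1$, and pairing with Claim (II) gives (ii).

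The hardest part is steering the algebraic reductions so they do not collapse into tautologies: multiplying a hypothesis by a ``wrong'' factor typically turns both sides into $A^k$ via Lemma~\ref{CEP 1}(i), (v), or into $A^{\scriptsize\textcircled{$\dagger$}}$ via $A^{\scriptsize\textcircled{$\dagger$}} A A^{\scriptsize\textcircled{$\dagger$}}=A^{\scriptsize\textcircled{$\dagger$}}$, yielding no new information. The working recipe is to prefix by $A^D$ (or $(A^D)^2$), since this lets Lemma~\ref{CEP 1}(viii) substitute $A^{\scriptsize\textcircled{$\dagger$}}$ for $A(A^{\scriptsize\textcircled{$\dagger$}})^2$ and Lemma~\ref{CEP 1}(x) substitute $(A^{\scriptsize\textcircled{$\dagger$}})^2$ for $A^D A^{\scriptsize\textcircled{$\dagger$}}$ on opposite sides, genuinely breaking the symmetry between the two sides of the equation. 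The conceptual point driving (i) $\Rightarrow$ (vi) is simply that $AA^{\scriptsize\textcircled{$\dagger$}}$ is the orthogonal projector onto $\mathcal{R}(A^k)$, a subspace that already contains $A^D$.
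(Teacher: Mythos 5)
Your proposal is correct, and it reaches the equivalences by a route that differs from the paper's in most places. The paper proceeds by a chain of pairwise equivalences driven largely by double-commutativity of the group inverse: it identifies $(A^D)^\#=A_1$, $A_1^\#=A^D$ and, crucially, $(A^{\scriptsize\textcircled{$\dagger$}})^\#=A^{k+1}(A^k)^\dagger$ (using that $A^{\scriptsize\textcircled{$\dagger$}}$ is EP and $A^{\scriptsize\textcircled{$\dagger$}}=(A^{k+1}(A^k)^\dagger)^\dagger$), so that $(i)\Leftrightarrow(iii)$ and $(iii)\Leftrightarrow(iv)$ come for free from the double-commutant property, with $(iv)\Leftrightarrow(v)$ done by hand and $(i)\Rightarrow(vi)$ obtained by showing $(A^D)^2=(A^{\scriptsize\textcircled{$\dagger$}})^2$ and multiplying by $A$. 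You instead organize everything around the hub $(vi)$ and replace the group-inverse identifications for $(iii)$, $(iv)$, $(v)$ by direct reductions: the rewriting $A^{k+p}(A^k)^\dagger=A^{p+1}A^{\scriptsize\textcircled{$\dagger$}}$ from Lemma~\ref{CEP 1}$(ix)$, the identities $A_1A^D=AA^D$ and $A_1A^{\scriptsize\textcircled{$\dagger$}}=AA^{\scriptsize\textcircled{$\dagger$}}$, and well-chosen left multiplications by $A^D$ or $(A^D)^2$ exploiting Lemma~\ref{CEP 1}$(viii)$, $(x)$; I checked these computations (including $(iii)\Rightarrow(i)$, $(iv)\Rightarrow(ii)$, $(v)\Rightarrow(vi)$) and they are sound, and your $(i)\Rightarrow(vi)$ via the projector $A^k(A^k)^\dagger$ fixing $A^D$ (since $A^D=A^k(A^D)^{k+1}$) is a clean variant of the paper's computation. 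The parts that coincide with the paper are $(i)\Leftrightarrow(ii)$ (double commutant of the group inverse via Lemma~\ref{DMP 1}$(vi)$) and $(viii)\Rightarrow(vi)$ (multiply by $A^{m-1}$). What your approach buys is that it avoids needing $A^{\scriptsize\textcircled{$\dagger$}}$ to be EP and the identification $(A^{\scriptsize\textcircled{$\dagger$}})^\#=A^{k+1}(A^k)^\dagger$ altogether, at the cost of more explicit symbol pushing; the paper's double-commutativity argument is slicker per step but leans on more structural facts about group inverses.
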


 \begin{proof} Assume that $A$ is CEPD, i.e., $A^D A^{\scriptsize\textcircled{$\dagger$}} = A^{\scriptsize\textcircled{$\dagger$}} A^D$. It is known,
by Lemma \ref{DMP 1}$(vi)$, that $(A^D)^\#=A_1$. Now, by the fact that the group inverse $(A^D)^\#$ double commutes with $A^D$
(i.e., $(A^D)^\# A^{\scriptsize\textcircled{$\dagger$}} = A^{\scriptsize\textcircled{$\dagger$}} (A^D)^\#$), we conclude that
$A_1 A^{\scriptsize\textcircled{$\dagger$}} = A^{\scriptsize\textcircled{$\dagger$}} A_1 $. \\
If $ A_1 A^{\scriptsize\textcircled{$\dagger$}} = A^{\scriptsize\textcircled{$\dagger$}} A_1 $, by $A_1^\#=A^D$ and double commutativity
(i.e., $ A_1^\# A^{\scriptsize\textcircled{$\dagger$}} = A^{\scriptsize\textcircled{$\dagger$}} A_1^\# $),
we deduce that $A^D A^{\scriptsize\textcircled{$\dagger$}} = A^{\scriptsize\textcircled{$\dagger$}} A^D$. So, $(i)$ is equivalent to $(ii)$.

 By Lemma \ref{CEP 1}$(vi)$, we have
$A^{\scriptsize\textcircled{$\dagger$}} =(A^{k+1} (A^k)^\dagger)^\dagger$.
Also, we know, by Lemma \ref{CEP 1}$(vii)$, that $A^{\scriptsize\textcircled{$\dagger$}}$ is $EP$. Therefore,
$(A^{\scriptsize\textcircled{$\dagger$}})^\dagger = (A^{\scriptsize\textcircled{$\dagger$}})^\#$, and so,
$((A^{k+1} (A^k)^\dagger)^\dagger)^\dagger = ((A^{k+1} (A^k)^\dagger)^\dagger)^\#$. It follows that
$(A^{\scriptsize\textcircled{$\dagger$}})^\# = A^{k+1} (A^k)^\dagger $. So, if
$A^D A^{\scriptsize\textcircled{$\dagger$}} = A^{\scriptsize\textcircled{$\dagger$}} A^D$, by double commutativity
(i.e., $A^D (A^{\scriptsize\textcircled{$\dagger$}})^\# = (A^{\scriptsize\textcircled{$\dagger$}})^\# A^D$), we see that
$ A^DA^{k+1} (A^k)^\dagger=A^{k+1} (A^k)^\dagger A^D$; as required. \\
If $ A^D A^{k+1} (A^k)^\dagger=A^{k+1} (A^k)^\dagger A^D$, by the fact that $(A^{k+1} (A^k)^\dagger)^\# = A^{\scriptsize\textcircled{$\dagger$}}$
and double commutativity
(i.e., $ A^D (A^{k+1} (A^k)^\dagger)^\# = (A^{k+1} (A^k)^\dagger)^\# A^D$),
we have $A^D A^{\scriptsize\textcircled{$\dagger$}} = A^{\scriptsize\textcircled{$\dagger$}} A^D$.
So, $(i)$ is equivalent to $(iii)$.

 Similarly, by $(A^D)^\# = A_1$ and $A_1^\#=A^D$, and also, by double commutativity, we have that $(iii)$ is equivalent to $(iv)$.

 To prove $(iv)\Leftrightarrow (v)$, by the fact $A_1 = A A^D A$ and using (\ref{D}), we have
\begin{eqnarray*}
A_1 A^{k+1} (A^k)^\dagger = A^{k+1} (A^k)^\dagger A_1 & \Leftrightarrow & A A^D A A^{k+1} (A^k)^\dagger = A^{k+1} (A^k)^\dagger A A^D A \\
& \Leftrightarrow & A^2 A^D A^{k+1} (A^k)^\dagger = A^{k+1} (A^k)^\dagger A^k (A^D)^k A \\
& \Leftrightarrow & A^2 A^k (A^k)^\dagger = A A^k (A^D)^k A \\
& \Leftrightarrow & A^{k+2} (A^k)^\dagger = A A_1;
\end{eqnarray*}
as required.

 By Lemma \ref{CEP 1}($(iii)$ and $(x)$), we have
$A^{\scriptsize\textcircled{$\dagger$}} A^D = A^D A^{\scriptsize\textcircled{$\dagger$}}$ if and only if
$A^D A^k(A^k)^\dagger A^D = (A^{\scriptsize\textcircled{$\dagger$}})^2$. From
\begin{eqnarray*}
A^D A^k(A^k)^\dagger A^D&=&A^D A^k(A^k)^\dagger A^k(A^D)^{k+1}\\&=&A^D A^k(A^D)^{k+1}\\&=&(A^D)^2,
\end{eqnarray*}
we get $(A^D)^2=(A^{\scriptsize\textcircled{$\dagger$}})^2$, and so, by Lemma \ref{CEP 1}$(viii)$, we see that
$$A^D=A(A^D)^2=A(A^{\scriptsize\textcircled{$\dagger$}})^2=A^{\scriptsize\textcircled{$\dagger$}}.$$ Therefore, $(i)$ implies $(vi)$, and so, $(i)$ and $(vi)$ are equivalent.

 To complete the proof, it is enough to prove that $(viii)$ implies $(vi)$. For this,
suppose that $(A^{\scriptsize\textcircled{$\dagger$}})^m=(A^D)^m$ for some $m\geq 2$. Then
by Lemma \ref{CEP 1}$(viii)$, we see that
$A^D=A^{m-1}(A^D)^m=A^{m-1}(A^{\scriptsize\textcircled{$\dagger$}})^m=A^{\scriptsize\textcircled{$\dagger$}}$.
Hence, the result holds.
\end{proof}

 Using the Hartwing-Spindelb\"{o}ck decomposition as in (\ref{HS}), we state the following theorem.

 \begin{Th}\label{HSD}
For $ A\in\mathbb{M}_n(\mathbb{C})$ with the Hartwing-Spindelb\"{o}ck decomposition as in (\ref{HS}) and $ind(A) = k$, the following statements are equivalent:
\begin{enumerate}[(i)]
\item $A$ is CEPD;
\item $\Sigma K$ is CEPD and $ (\Sigma K)^{k-1} \Sigma L = 0 $;
\item $ A^{\scriptsize\textcircled{$\dagger$}} = A^{D,\dagger} $ and $(\Sigma K)^{k-1} \Sigma L = 0$.
\end{enumerate}
\end{Th}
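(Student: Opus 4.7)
The plan is to reduce everything to the blockwise comparison of the matrices $A^D$, $A^{\scriptsize\textcircled{$\dagger$}}$, and $A^{D,\dagger}$ provided by Lemma \ref{HSD 2}, using Theorem \ref{CEPD equivalent}(vi) as the bridge: $A$ is CEPD if and only if $A^{\scriptsize\textcircled{$\dagger$}} = A^D$. Combining Lemma \ref{HSD 2}(i) and (ii), the matrix difference
\[
A^D - A^{\scriptsize\textcircled{$\dagger$}} \;=\; U \begin{bmatrix} (\Sigma K)^D - (\Sigma K)^{\scriptsize\textcircled{$\dagger$}} & ((\Sigma K)^D)^2 \Sigma L \\ 0 & 0 \end{bmatrix} U^{\ast}
\]
vanishes if and only if both $(\Sigma K)^{\scriptsize\textcircled{$\dagger$}} = (\Sigma K)^D$ (i.e.\ $\Sigma K$ is CEPD by Theorem \ref{CEPD equivalent}(vi) applied to $\Sigma K$) and $((\Sigma K)^D)^2 \Sigma L = 0$. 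This immediately gives half of the equivalence (i)$\Leftrightarrow$(ii), modulo one lingering issue.

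The main technical step will be to show that $((\Sigma K)^D)^2 \Sigma L = 0$ is equivalent to $(\Sigma K)^{k-1} \Sigma L = 0$, using only that $\mathrm{ind}(\Sigma K) = k-1$. Writing $B := \Sigma K$, the Drazin axioms (\ref{D}) together with commutativity of $B$ and $B^D$ give the two key identities
\[
B^{k+1}(B^D)^2 \;=\; B^{k-1} \qquad \text{and} \qquad (B^D)^{k+1} B^{k-1} \;=\; (B^D)^2,
\]
the first obtained from $B^{k+1}(B^D)^2 = B^{k}(BB^D)B^D = B^k B^D = B^{k-1}$, and the second from iterating $B^D B B^D = B^D$. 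Multiplying $((\Sigma K)^D)^2 \Sigma L = 0$ on the left by $B^{k+1}$ yields one direction, and multiplying $(\Sigma K)^{k-1} \Sigma L = 0$ on the left by $(B^D)^{k+1}$ yields the other. This finishes (i)$\Leftrightarrow$(ii).

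For (ii)$\Leftrightarrow$(iii), the equivalence is even more transparent: comparing Lemma \ref{HSD 2}(ii) with Lemma \ref{HSD 2}(iii) gives
\[
A^{\scriptsize\textcircled{$\dagger$}} - A^{D,\dagger} \;=\; U \begin{bmatrix} (\Sigma K)^{\scriptsize\textcircled{$\dagger$}} - (\Sigma K)^D & 0 \\ 0 & 0 \end{bmatrix} U^{\ast},
\]
so $A^{\scriptsize\textcircled{$\dagger$}} = A^{D,\dagger}$ if and only if $(\Sigma K)^{\scriptsize\textcircled{$\dagger$}} = (\Sigma K)^D$, which by Theorem \ref{CEPD equivalent}(vi) for $\Sigma K$ is precisely the statement that $\Sigma K$ is CEPD. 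Adjoining the common side condition $(\Sigma K)^{k-1}\Sigma L = 0$ in both (ii) and (iii), the equivalence is immediate.

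The only subtle point I expect is the degenerate index situation: if $k = 1$, then $\mathrm{ind}(\Sigma K) = 0$, so $\Sigma K$ is invertible and both $B^{k-1} = I$ and the Drazin identities above still hold in the standard way, so the conditions $(\Sigma K)^{k-1}\Sigma L = 0$ and $((\Sigma K)^D)^2 \Sigma L = 0$ both reduce to $\Sigma L = 0$; a brief remark in the proof should dispose of this boundary case. Apart from this, the argument is a clean block computation once Theorem \ref{CEPD equivalent}(vi) and Lemma \ref{HSD 2} are in hand.
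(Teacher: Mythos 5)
Your proposal is correct and takes essentially the same route as the paper: both reduce the statement to the block forms in Lemma \ref{HSD 2} together with Theorem \ref{CEPD equivalent}($(i)\Leftrightarrow(vi)$), so that $A$ being CEPD amounts to $(\Sigma K)^{\scriptsize\textcircled{$\dagger$}}=(\Sigma K)^D$ and $((\Sigma K)^D)^2\Sigma L=0$, and $(ii)\Leftrightarrow(iii)$ is the same block comparison of $A^{\scriptsize\textcircled{$\dagger$}}$ with $A^{D,\dagger}$. The only (harmless) difference is the sub-step showing $((\Sigma K)^D)^2\Sigma L=0$ is equivalent to $(\Sigma K)^{k-1}\Sigma L=0$: you use just the Drazin axioms via left multiplication by $B^{k+1}$ and $(B^D)^{k+1}$ with $B=\Sigma K$, whereas the paper passes through $(\Sigma K)^D\Sigma L=0$ and the representation $(\Sigma K)^D=(\Sigma K)^{k-1}((\Sigma K)^{2k-1})^{\dagger}(\Sigma K)^{k-1}$; both arguments are valid.
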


 \begin{proof}
By Lemma \ref{HSD 2}($(i)$ and $(ii)$) and using Theorem \ref{CEPD equivalent}$((i) \Leftrightarrow (vi))$, we conclude that
$A$ is CEPD if and only if $\Sigma K$ is CEPD and $((\Sigma K)^D)^2\Sigma L = 0$. We know, by \cite[Lemma 2.8]{MT},
that ${\rm ind}(\Sigma K) = k-1$, and hence, $(\Sigma K)^{k} (\Sigma K)^D =(\Sigma K)^{k-1}$. Also, we have
$\Sigma K ((\Sigma K)^D)^2 = (\Sigma K)^D$, and hence, $((\Sigma K)^D)^2\Sigma L = 0$ if and only if
$(\Sigma K)^D \Sigma L = 0$. So, if $(\Sigma K)^D \Sigma L = 0$, then $(\Sigma K)^{k-1} \Sigma L = 0$.
Conversely, if $(\Sigma K)^{k-1} \Sigma L = 0$, then by the fact that $(\Sigma K)^{k-1} ((\Sigma K)^{2k-1})^{\dagger} (\Sigma K)^{k-1} =
(\Sigma K)^D$, we see that $(\Sigma K)^D \Sigma L = 0$. Consequently, $(i)$ is equivalent to $(ii)$.

 By Lemma \ref{HSD 2}($(ii)$ and $(iii)$) and Theorem \ref{CEPD equivalent}$((i) \Leftrightarrow (vi))$, we see that
$A^{\scriptsize\textcircled{$\dagger$}} = A^{D,\dagger}$
if and only if $\Sigma K$ is CEPD. So, $(ii)$ is equivalent to $(iii)$, and hence, the proof is complete.
\end{proof}

The following result is related to the direct sum of two CEPD matrices.

 \begin{Th}\label{D.S}
For $ A \in\mathbb{M}_n(\mathbb{C})$ and $ B \in\mathbb{M}_m(\mathbb{C})$, the matrix
$A\oplus B$ is a CEPD if and only if $A$ and $B$ are CEPD.
\end{Th}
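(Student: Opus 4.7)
The plan is to reduce the question to the equivalence $(i)\Leftrightarrow (vi)$ in Theorem \ref{CEPD equivalent}, which says that a matrix $M$ is CEPD if and only if $M^{\scriptsize\textcircled{$\dagger$}}=M^D$. Once this is in hand, it suffices to show that the Drazin inverse and the core EP inverse distribute over direct sums, since then the single equality $(A\oplus B)^{\scriptsize\textcircled{$\dagger$}}=(A\oplus B)^D$ will split into the two block equalities $A^{\scriptsize\textcircled{$\dagger$}}=A^D$ and $B^{\scriptsize\textcircled{$\dagger$}}=B^D$.

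First I would verify that $(A\oplus B)^D = A^D\oplus B^D$ by checking the three defining properties in (\ref{D}) block by block, using that $\mathrm{ind}(A\oplus B)=\max\{\mathrm{ind}(A),\mathrm{ind}(B)\}$ so that $(A\oplus B)^{k+1}=A^{k+1}\oplus B^{k+1}$ for any $k$ at least equal to this maximum (and the Drazin identity $X M^{k+1}=M^k$ holds for every such $k$, by the standard fact $M^D M^{j+1}=M^j$ for all $j\ge \mathrm{ind}(M)$). Next I would verify that $(A\oplus B)^{\scriptsize\textcircled{$\dagger$}}=A^{\scriptsize\textcircled{$\dagger$}}\oplus B^{\scriptsize\textcircled{$\dagger$}}$, again block by block, using the characterization (\ref{CEP}): the outer-inverse equation $X M X=X$ is trivially diagonal, and the range conditions $\mathcal{R}(X)=\mathcal{R}(M^k)=\mathcal{R}(X^{\ast})$ split since the range of a block-diagonal matrix is the direct sum of the ranges of its blocks (and the same for the conjugate transpose).

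With these two formulas established, the theorem is immediate. By the equivalence $(i)\Leftrightarrow (vi)$ of Theorem \ref{CEPD equivalent}, $A\oplus B$ is CEPD exactly when
\[
A^{\scriptsize\textcircled{$\dagger$}}\oplus B^{\scriptsize\textcircled{$\dagger$}}=(A\oplus B)^{\scriptsize\textcircled{$\dagger$}}=(A\oplus B)^D=A^D\oplus B^D,
\]
and by comparing the diagonal blocks this is equivalent to $A^{\scriptsize\textcircled{$\dagger$}}=A^D$ and $B^{\scriptsize\textcircled{$\dagger$}}=B^D$, i.e., to both $A$ and $B$ being CEPD (again by $(i)\Leftrightarrow(vi)$ of Theorem \ref{CEPD equivalent}).

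The only real obstacle is the bookkeeping in verifying the direct-sum formulas for $M^D$ and $M^{\scriptsize\textcircled{$\dagger$}}$. These are essentially folklore, and could alternatively be invoked from a reference; if one wishes to avoid them entirely, an alternative route is to expand the CEPD commutator $A^{\scriptsize\textcircled{$\dagger$}}A^D-A^D A^{\scriptsize\textcircled{$\dagger$}}$ on $A\oplus B$ directly using Lemma \ref{CEP 1}$(iv)$ and the explicit block form of $(A\oplus B)^k$ and of the Moore-Penrose inverse of a block-diagonal matrix, but this is strictly more painful than the clean two-line argument above.
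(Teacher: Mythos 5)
Your proposal is correct and follows essentially the same route as the paper: both proofs establish $(A\oplus B)^D = A^D\oplus B^D$ and $(A\oplus B)^{\scriptsize\textcircled{$\dagger$}} = A^{\scriptsize\textcircled{$\dagger$}}\oplus B^{\scriptsize\textcircled{$\dagger$}}$ and then compare diagonal blocks. The only cosmetic differences are that the paper gets the Drazin formula by citation and the core EP formula from Lemma \ref{CEP 1}$(iii)$ (rather than verifying the defining equations (\ref{D}) and (\ref{CEP}) blockwise as you do), and it concludes by splitting the commutator $(A\oplus B)^D(A\oplus B)^{\scriptsize\textcircled{$\dagger$}}=(A\oplus B)^{\scriptsize\textcircled{$\dagger$}}(A\oplus B)^D$ blockwise instead of invoking Theorem \ref{CEPD equivalent}$((i)\Leftrightarrow(vi))$.
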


 \begin{proof}
By \cite[ Theorem 8 in p. 164]{BG}, it is obvious that $(A\oplus B)^D = A^D \oplus B^D$. By setting $k_1 = {\rm ind}(A), \ k_2 = {\rm ind}(B), \ k=max\lbrace k_1, k_2 \rbrace $, and by using Lemma \ref{CEP 1}$(iii)$, we have
\begin{eqnarray*}
(A \oplus B)^{\scriptsize\textcircled{$\dagger$}} & = & (A \oplus B)^D (A \oplus B)^k ((A\oplus B)^k)^\dagger \\
& = & (A^D \oplus B^D) (A^k \oplus B^k) ((A^k)^\dagger \oplus (B^k)^\dagger) \\
& = & A^D A^k (A^k)^\dagger \oplus B^D B^k (B^k)^\dagger \\
& = & A^{\scriptsize\textcircled{$\dagger$}} \oplus B^{\scriptsize\textcircled{$\dagger$}}.
\end{eqnarray*}
Hence,
$(A\oplus B)^D (A\oplus B)^{\scriptsize\textcircled{$\dagger$}} = (A\oplus B)^{\scriptsize\textcircled{$\dagger$}} (A\oplus B)^D$
if and only if $ A^D A^{\scriptsize\textcircled{$\dagger$}} = A^{\scriptsize\textcircled{$\dagger$}} A^D $ and
$B^D B^{\scriptsize\textcircled{$\dagger$}} = B^{\scriptsize\textcircled{$\dagger$}} B^D $;
completing the proof.
\end{proof}

In the following result, we find those $k-$EP matrices that are CEPD.

 \begin{Th}\label{CEP and D}
For $k-$EP matrix $ A\in\mathbb{M}_n(\mathbb{C})$ with $k = ind(A) \leq 2 $, it holds that $ A^{\scriptsize\textcircled{$\dagger$}} = A^D$, and
consequently, $A$ is CEPD.
\end{Th}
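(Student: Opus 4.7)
The plan is to prove $A^{\scriptsize\textcircled{$\dagger$}}=A^D$ case by case on $k\in\{0,1,2\}$; the CEPD conclusion then follows at once from the equivalence $(i)\Leftrightarrow (vi)$ of Theorem~\ref{CEPD equivalent}.

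For $k=0$, $A$ is invertible and all four of the inverses $A^{-1}, A^\dagger, A^D, A^{\scriptsize\textcircled{$\dagger$}}$ coincide, so there is nothing to do. For $k=1$, the hypothesis $AA^\dagger=A^\dagger A$ is exactly the definition of EP, so $A^\dagger=A^\#=A^D$, and Lemma~\ref{CEP 1}(iii) with $m=1$ yields $A^{\scriptsize\textcircled{$\dagger$}}=A^DAA^\dagger=A^\#AA^\#=A^\#=A^D$.

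The substantive case is $k=2$, where I would use the decomposition~(\ref{HS}) together with the well-known block form $A^\dagger=U\left[\begin{smallmatrix}K^*\Sigma^{-1} & 0\\ L^*\Sigma^{-1} & 0\end{smallmatrix}\right]U^*$ of the Moore--Penrose inverse. A direct $2\times 2$ block computation, simplified by $KK^*+LL^*=I$, collapses $A^2A^\dagger$ to the matrix whose only nonzero block is $\Sigma K$ in position $(1,1)$, while $A^\dagger A^2$ has $(1,1)$-block $K^*K\Sigma K$ and $(1,2)$-block $K^*K\Sigma L$. Equating the two forces the identities $\Sigma K=K^*K(\Sigma K)$ and $K^*K(\Sigma L)=0$. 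The first shows $\mathcal{R}(\Sigma K)\subseteq \mathcal{R}(K^*)=\mathcal{R}((\Sigma K)^*)$, and since the two subspaces have equal dimension, $\mathcal{R}(\Sigma K)=\mathcal{R}((\Sigma K)^*)$, so $\Sigma K$ is EP. The second, together with the standard equivalence $K^*Kv=0\iff Kv=0$, gives $K(\Sigma L)=0$, hence $(\Sigma K)(\Sigma L)=0$.

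Since ${\rm ind}(\Sigma K)=k-1=1$, the EP matrix $\Sigma K$ is group invertible with $(\Sigma K)^\#=(\Sigma K)^\dagger=(\Sigma K)^D$, and Lemma~\ref{CEP 1}(iii) with $m=1$ places $(\Sigma K)^{\scriptsize\textcircled{$\dagger$}}$ in the same class, so $\Sigma K$ is CEPD. Combined with the annihilation $(\Sigma K)^{k-1}(\Sigma L)=0$, clause (ii) of Theorem~\ref{HSD} is satisfied, forcing $A$ to be CEPD, and Theorem~\ref{CEPD equivalent} then delivers $A^{\scriptsize\textcircled{$\dagger$}}=A^D$. The only real obstacle is the $k=2$ case; inside it, the key step is distilling the operator identity $A^2A^\dagger=A^\dagger A^2$ into the two concrete pieces ``$\Sigma K$ is EP'' and ``$(\Sigma K)(\Sigma L)=0$'' via the block computation, after which Theorems~\ref{HSD} and~\ref{CEPD equivalent} finish the bookkeeping.
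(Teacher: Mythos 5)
Your argument is correct, but the substantive case $k=2$ is handled by a genuinely different route than the paper's. The paper quotes \cite[Proposition 2.13]{MRT} to get that a $2$-EP matrix has $A^2$ EP, and then finishes with a one-line chain of identities, $A^{\scriptsize\textcircled{$\dagger$}} = A^D A^2 (A^2)^\dagger = A(A^2)^D A^2(A^2)^D = A(A^D)^2 = A^D$, using only Lemma \ref{CEP 1}$(iii)$ and (\ref{D}). You instead work inside the Hartwig--Spindelb\"ock decomposition: writing out $A^2A^\dagger = A^\dagger A^2$ in block form (with the standard formula $A^\dagger = U\bigl[\begin{smallmatrix}K^*\Sigma^{-1} & 0\\ L^*\Sigma^{-1} & 0\end{smallmatrix}\bigr]U^*$, which the paper never states but is in \cite{HS,MT}) yields $\Sigma K = K^*K\Sigma K$ and $K^*K\Sigma L = 0$, from which you correctly extract that $\Sigma K$ is EP (range inclusion plus equal ranks) and $\Sigma K\,\Sigma L = 0$; then Theorem \ref{HSD}$(ii)$ and Theorem \ref{CEPD equivalent} close the argument. (You only use two of the four block equations, which is fine since you need only necessary conditions; and the use of the decomposition is legitimate because ${\rm ind}(A)=2$ forces ${\rm rank}(A)>0$.) What your route buys is self-containedness: it replaces the external citation from \cite{MRT} by an explicit computation, and it exhibits concretely why a $2$-EP matrix satisfies the structural criterion of Theorem \ref{HSD}. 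What the paper's route buys is brevity and independence from the decomposition and from range/rank arguments --- it is a purely identity-based verification of condition $(vi)$ of Theorem \ref{CEPD equivalent}. Your cases $k=0$ and $k=1$ coincide with the paper's.
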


 \begin{proof}
It is clear that the result holds for $k=0$. If $k = 1$, then $A$ is EP, and so, by Lemma \ref{CEP 1}$(iii)$ and using (\ref{D}), we have
$ A^{\scriptsize\textcircled{$\dagger$}} = A^D A A^\dagger = A^D A A^D = A^D $. Hence, in this case, the result also holds. Now, let
$k=2$. Since $A$ is $2-$EP, it follows, by \cite[Proposition 2.13]{MRT}, that $A^2$ is EP. So, by Lemma \ref{CEP 1}$(iii)$ and using (\ref{D}),
we have
\begin{eqnarray*}
A^{\scriptsize\textcircled{$\dagger$}} & = & A^D A^2 (A^2)^\dagger \\
& = & A^D A A^D A^2 (A^2)^\dagger \\
& = & A (A^D)^2 A^2 (A^2)^\dagger \\
& = & A (A^2)^D A^2 (A^2)^D \\
& = & A (A^2)^D \\
& =& A (A^D)^2 \\
& = & A^D;
\end{eqnarray*}
completing the proof.
\end{proof}

Note that $k-$EP matrices $A$ with $k= {\rm ind}(A)>2$ may or may not be CEPD; as the following example shows.

 \begin{Ex}
Let $ A = \begin{bmatrix}
1 & 1 & 3 \\
5 & 2 & 6 \\
-2 & -1 & -3 \\
\end{bmatrix} $ and $ B = \begin{bmatrix}
-1 & 1 & 0 & 0 & 1 & 0 \\
1 & -1 & 0 & 0 & -1 & 0 \\
0 & 0 & 0 & 1 & -1 & 1 \\
-1 & -1 & 0 & 0 & 1 & -1 \\
1 & -1 & 0 & 0 & 1 & -1 \\
1 & -1 & 0 & 0 & 0 & 0 \\
\end{bmatrix}$. Obviously, $A^3 = 0$, ${\rm ind}(A) =3 $, and so, $A$ is $3-$EP. Moreover, by Lemma \ref{CEP 1}$(iii)$, we see that
$A^{\scriptsize\textcircled{$\dagger$}} = A^D = 0$. Hence, $A$ is a CEPD matrix.
Also, we see that ${\rm ind}(B) = 3 $ and $B$ is $3-$EP. By a simple computation, we have
\begin{center}
$ B^D = \begin{bmatrix}
0 & 0 & 0 & 0 & \frac{1}{10^{4}} & \frac{4999}{10^{4}} \\
0 & 0 & 0 & 0 & -\frac{1}{10^{4}} & -\frac{4999}{10^{4}}\\
0 & 0 & 0 & 0 & 0 & 0 \\
0 & 0 & 0 & 0 & 1 & -1 \\
\frac{1}{2} & -\frac{1}{2} & 0 & 0 & \frac{3}{10^{4}} & \frac{9996}{10^{4}} \\
\frac{1}{2} & -\frac{1}{2} & 0 & 0 & -\frac{9997}{10^{4}} & \frac{4999}{25 \times 10^{2}} \\
\end{bmatrix}$, and
\end{center}

 \begin{center}
$ B^{\scriptsize\textcircled{$\dagger$}} = \dfrac{1}{6} \begin{bmatrix}
0 & 0 & 0 & -1 & 1 & 2 \\
0 & 0 & 0 & 1 & -1 & -2 \\
0 & 0 & 0 & 0 & 0 & 0 \\
0 & 0 & 0 & 4 & 2 & -2 \\
3 & -3 & 0 & -2 & 2 & 4 \\
3 & -3 & 0 & -6 & 0 & 6 \\
\end{bmatrix}$.
\end{center}
Since $B^D \neq B^{\scriptsize\textcircled{$\dagger$}} $, it follows, by Theorem \ref{CEPD equivalent}$((i) \Leftrightarrow (vi))$, that $B$ is not CEPD.
Moreover, in view of Theorem \ref{D.S}, for any $n\in \mathbb{N}$, the matrix $A\oplus I_n $ is CEPD, and $B\oplus I_n $ is not CEPD.
So, we can find a lot of matrices which are CEPD, and a lot of matrices which are not CEPD.
\end{Ex}

In the following proposition, we find some other classes of CEPD matrices.

 \begin{Prop}\label{normal is CEPD}
Let $ A\in\mathbb{M}_n(\mathbb{C})$. If $A$ is normal (i.e., $A A^\ast = A^\ast A $) or $A A^{\scriptsize\textcircled{$\dagger$}} =
A^{\scriptsize\textcircled{$\dagger$}} A $, then $A$ is CEPD.
\end{Prop}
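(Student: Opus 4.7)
The plan is to reduce both cases to earlier results in the paper, using the characterization $(i) \Leftrightarrow (vi)$ from Theorem \ref{CEPD equivalent} and the fact that the Drazin inverse double commutes with $A$.

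First, for the normal case, I would argue that normality forces $\text{ind}(A) \leq 1$ and EP. Writing $A = UDU^\ast$ via the spectral theorem (unitarily diagonalizable since $A$ is normal), one gets $A^\dagger = UD^\dagger U^\ast$, which clearly satisfies $AA^\dagger = A^\dagger A$. So $A$ is EP; in particular $A$ is group invertible, whence $\text{ind}(A) \leq 1$, and trivially $A$ is $k$-EP for $k=\text{ind}(A) \leq 1$. Now Theorem \ref{CEP and D} directly yields $A^{\scriptsize\textcircled{$\dagger$}} = A^D$, and by Theorem \ref{CEPD equivalent}$((vi)\Rightarrow(i))$, $A$ is CEPD.

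Second, for the case $AA^{\scriptsize\textcircled{$\dagger$}} = A^{\scriptsize\textcircled{$\dagger$}}A$, the desired conclusion $A^D A^{\scriptsize\textcircled{$\dagger$}} = A^{\scriptsize\textcircled{$\dagger$}} A^D$ is immediate: the hypothesis says that $A$ commutes with $A^{\scriptsize\textcircled{$\dagger$}}$, and by the double-commuting property of the Drazin inverse recalled right before Theorem \ref{CEPD equivalent}, any matrix commuting with $A$ also commutes with $A^D$. Applying this with $B = A^{\scriptsize\textcircled{$\dagger$}}$ gives exactly Definition \ref{CEPD}.

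I do not anticipate a real obstacle: both implications collapse to one-line invocations of machinery already set up in Sections 2 and 3. The only potentially delicate point is justifying that normal matrices are EP with index at most $1$, but this is essentially immediate from the spectral theorem and the standard characterization of EP via group invertibility, so the proof should be very short, with the two cases treated independently.
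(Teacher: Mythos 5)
Your proof is correct and follows essentially the same route as the paper: the normal case is reduced to the EP case and handled by Theorem \ref{CEP and D} (your spectral-theorem justification just makes explicit what the paper calls obvious), and the commuting case is settled by the double-commuting property of the Drazin inverse, which is exactly the fact the paper invokes via ``$A^D$ is a polynomial in $A$''.
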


 \begin{proof}
At first, we assume that $A$ is normal. Then, obviously, $A$ is EP, and so, by Theorem \ref{CEP and D}, $A$ is CEPD.

 Secondly, if $A$ and $A^{\scriptsize\textcircled{$\dagger$}}$ commute, then $A^{\scriptsize\textcircled{$\dagger$}}$ and $A^D$ also commute;
because $A^D$ is a polynomial in $A$ (see \cite[Lemma 5 in p. 164]{BG} or \cite[Theorem 2.4.30]{MBM}). So, the proof is complete.
\end{proof}

 We state some additional properties of CEPD matrices in the following result.

 \begin{Th}\label{properties of CEPD}
For CEPD matrix $ A\in\mathbb{M}_n(\mathbb{C})$ with $k = ind(A)$, the following assertions are true:
\begin{enumerate}[(i)]
\item $ A^k A^{\scriptsize\textcircled{$\dagger$}} = (A^k)^2 (A^k)^\dagger A^D $;
\item $ A^{c\dagger} A^D = A^\dagger A^{\scriptsize\textcircled{$\dagger$}} $;
\item $A^{\dagger,D} A^{\scriptsize\textcircled{$\dagger$}} A^\dagger = A^\dagger A^{\scriptsize\textcircled{$\dagger$}} A^\dagger $;
\item $ A^D A^k A^\ast = A^D A^k A^{D,\ast} = A^{\scriptsize\textcircled{$\dagger$}} A^k A^\ast $;
\item $(A^k)^2 (A^k)^\dagger (A^{k+1})^\dagger = A^{\scriptsize\textcircled{$\dagger$}}$.
\end{enumerate}
\end{Th}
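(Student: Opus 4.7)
The crux of every part is Theorem \ref{CEPD equivalent}, which tells us that under the CEPD hypothesis we have $A^{\scriptsize\textcircled{$\dagger$}}=A^{D}$. This single identity, plugged into the several characterizations of $A^{\scriptsize\textcircled{$\dagger$}}$ collected in Lemma \ref{CEP 1}, will drive everything. The most useful consequence is that Lemma \ref{CEP 1}(ix) degenerates under CEPD to $AA^{D}=A^{k}(A^{k})^{\dagger}$; in other words, the orthogonal projector onto $\mathcal{R}(A^{k})$ coincides with the group-type projector $AA^{D}$. This is the key extra fact available only in the CEPD setting, and it powers parts (i) and (v).

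For (i), rewrite the left-hand side as $A^{k}A^{D}$ and the right-hand side as $A^{k}\cdot A^{k}(A^{k})^{\dagger}\cdot A^{D}$. Replacing the middle factor by $AA^{D}$ and using $A(A^{D})^{2}=A^{D}$ (which is Lemma \ref{CEP 1}(viii) applied to $A^{\scriptsize\textcircled{$\dagger$}}=A^{D}$) collapses both sides to $A^{k}A^{D}$. Part (v) runs on the same engine: $(A^{k})^{2}(A^{k})^{\dagger}=A^{k}\cdot AA^{D}=A^{k+1}A^{D}=A^{k}$, and then Lemma \ref{CEP 1}(iv) yields $(A^{k})^{2}(A^{k})^{\dagger}(A^{k+1})^{\dagger}=A^{k}(A^{k+1})^{\dagger}=A^{\scriptsize\textcircled{$\dagger$}}$.

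For (ii), expand $A^{c\dagger}=A^{\dagger}AA^{D}AA^{\dagger}$ via Lemma \ref{DMP 1}(iii). Since $\mathcal{R}(A^{D})\subseteq \mathcal{R}(A)$, the projector $AA^{\dagger}$ fixes $A^{D}$, so $AA^{\dagger}A^{D}=A^{D}$; together with $A(A^{D})^{2}=A^{D}$ this gives $A^{c\dagger}A^{D}=A^{\dagger}A^{D}$. Under CEPD this is exactly $A^{\dagger}A^{\scriptsize\textcircled{$\dagger$}}$, completing the argument. For (iii), start from $A^{\dagger,D}=A^{\dagger}AA^{D}$ (Lemma \ref{DMP 1}(ii)) and combine $A^{D}A^{\scriptsize\textcircled{$\dagger$}}=(A^{\scriptsize\textcircled{$\dagger$}})^{2}$ (Lemma \ref{CEP 1}(x)) with $A(A^{\scriptsize\textcircled{$\dagger$}})^{2}=A^{\scriptsize\textcircled{$\dagger$}}$ (Lemma \ref{CEP 1}(viii)) to obtain $AA^{D}A^{\scriptsize\textcircled{$\dagger$}}=A^{\scriptsize\textcircled{$\dagger$}}$; postmultiplying by $A^{\dagger}$ yields the identity. (This part in fact does not require the CEPD hypothesis.)

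Part (iv) splits into two equalities. The first, $A^{D}A^{k}A^{\ast}=A^{D}A^{k}A^{D,\ast}$, is purely algebraic: substitute $A^{D,\ast}=A^{D}AA^{\ast}$ and commute to get $A^{D}A^{k}A^{D}A=(A^{D})^{2}A^{k+1}=A^{D}A^{k}$ (using $A^{D}A=AA^{D}$ and $(A^{D})^{2}A=A^{D}$), so the extra $A^{D}A$ evaporates. The second equality, $A^{D}A^{k}A^{\ast}=A^{\scriptsize\textcircled{$\dagger$}}A^{k}A^{\ast}$, is immediate from Lemma \ref{CEP 1}(v). The main obstacle throughout is purely bookkeeping: choosing the right clause of Lemma \ref{CEP 1} at each step and remembering that $A^{D}$ commutes with all powers of $A$, so factors can be shuffled freely without the CEPD hypothesis doing any heavy lifting beyond supplying the single equation $A^{\scriptsize\textcircled{$\dagger$}}=A^{D}$ (and its projector reformulation $AA^{D}=A^{k}(A^{k})^{\dagger}$).
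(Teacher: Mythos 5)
Your proposal is correct and follows essentially the same route as the paper: reduce everything to $A^{\scriptsize\textcircled{$\dagger$}}=A^D$ via Theorem \ref{CEPD equivalent} and then manipulate with Lemmas \ref{CEP 1} and \ref{DMP 1}, exactly as the paper does. The small variants you choose are all valid — the projector step $AA^\dagger A^D=A^D$ in (ii) in place of the DMP identity of Lemma \ref{DMP 1}(iv), and Lemma \ref{CEP 1}(v) for the second equality in (iv) — and your remark that (iii) (and in fact that second equality in (iv)) holds without the CEPD hypothesis is accurate, since Lemma \ref{CEP 1}(x) and (v) hold for arbitrary $A$.
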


 \begin{proof}
To prove item $(i)$, by (\ref{D}) and Lemma \ref{CEP 1}$(iii)$, we can see that
\begin{eqnarray*}
A^k A^{\scriptsize\textcircled{$\dagger$}} & = & A^{k+1} A^D A^{\scriptsize\textcircled{$\dagger$}} \\
& = & A^{k+1} A^{\scriptsize\textcircled{$\dagger$}} A^D \\
& = & A^{k+1} A^D A^k (A^k)^\dagger A^D \\
& = & (A^k)^2 (A^k)^\dagger A^D,
\end{eqnarray*}
and this completes the proof of $(i)$.
To prove the assertion $(ii)$, by Lemma \ref{DMP 1}($(iii)$ and $(i)$), Theorems \ref{HSD} and \ref{CEPD equivalent}$(vi)$
and Lemma \ref{CEP 1}$(viii)$, we can see that
\begin{eqnarray*}
A^{c\dagger} A^D & = & A^\dagger A A^D A A^\dagger A^D \\
& = & A^\dagger A A^{D,\dagger} A^D \\
& = & A^\dagger A (A^{\scriptsize\textcircled{$\dagger$}} A^{\scriptsize\textcircled{$\dagger$}})\\
& = & A^\dagger A^{\scriptsize\textcircled{$\dagger$}}.
\end{eqnarray*}
So, the result in $(ii)$ holds.
To prove the assertion $(iii)$, by using Lemma \ref{DMP 1}$(ii)$, Theorem \ref{CEPD equivalent}$(vi)$ and Lemma \ref{CEP 1}$(viii)$, we have
\begin{eqnarray*}
A^{\dagger,D} A^{\scriptsize\textcircled{$\dagger$}} A^\dagger & = & A^\dagger A A^D A^{\scriptsize\textcircled{$\dagger$}} A^\dagger \\
& = & A^\dagger A (A^{\scriptsize\textcircled{$\dagger$}})^2 A^\dagger \\
& = & A^\dagger A^{\scriptsize\textcircled{$\dagger$}} A^\dagger.
\end{eqnarray*}
So, the proof of $(iii)$ is complete.
To prove the first equality of item $(iv)$, by (\ref{D}) and the fact that $ A^{D,\ast} = A^D A A^\ast$, we have
\begin{eqnarray*}
A^D A^k A^\ast & = & A^D A^D A^{k+1} A^\ast \\
& = & A^D A^k A^D A A^\ast \\
& = & A^D A^k A^{D,\ast}.
\end{eqnarray*}
To prove the second equality of this item, by using Lemma \ref{CEP 1}$(i)$ and (\ref{D}), we have
\begin{eqnarray*}
A^D A^k A^{D,\ast} & = & A^D A^{\scriptsize\textcircled{$\dagger$}} A^{k+1} A^D A A^\ast \\
& = & A^{\scriptsize\textcircled{$\dagger$}} A^D A^{k+1} A^D A A^\ast \\
& = & A^{\scriptsize\textcircled{$\dagger$}} A^D A^{k+1} A^\ast \\
& = & A^{\scriptsize\textcircled{$\dagger$}} A^k A^\ast.
\end{eqnarray*}
So, the result in $(iv)$ holds.

 Finally, to prove item $(v)$, by Lemma \ref{CEP 1}$(ix)$, Theorem \ref{CEPD equivalent}$(vi)$, relation (\ref{D}) and Lemma \ref{CEP 1}$(iv)$,
we can see that
\begin{eqnarray*}
(A^k)^2 (A^k)^\dagger (A^{k+1})^\dagger & = & A^k A^k (A^k)^\dagger (A^{k+1})^\dagger \\
& = & A^k A A^{\scriptsize\textcircled{$\dagger$}} (A^{k+1})^\dagger \\
& = & (A^{k+1} A^D) (A^{k+1})^\dagger \\
& = & A^k (A^{k+1})^\dagger \\
& = & A^{\scriptsize\textcircled{$\dagger$}};
\end{eqnarray*}
as required. So, the proof is complete.
\end{proof}

%%%%%%%%%%%%%%%%%%%%%%%%%%%%%%%%%%%%%%%%%%%%%%%%%%%%%%%%%%
%%%%%%%%%%%%%%%%%%%%%%%%%%%%%%%%%%%%%%%%%%%%%%%%%%%%%%%%%%

\noindent
\section{Partial isometries}

 Recall that a matrix $ A\in\mathbb{M}_n(\mathbb{C}) $ is a partial isometry if
$ \Vert Ax \Vert = \Vert x \Vert $ for every $ x \in (ker(A))^\perp$, or equivalently, $ A^\dagger = A^\ast $; for more information see \cite{HSS}. We know that partial isometries $A$ are SD, and so by \cite[Corollary 1]{HS}, it holds that
$ A^D A^\ast A = A A^\ast A^D $. Moreover, we state the following result.

 \begin{Th}\label{Drazin- Star} For partial isometry $ A\in\mathbb{M}_n(\mathbb{C}) $, it holds that
\begin{center}
$A A^\ast (A^{D,\ast})^2 A = A^D = A^D A^\ast A $.
\end{center}
Moreover, if $k = ind(A)$, then $ A^k A^D A^\ast A^2 = A^k $ and $ A^k A^\ast A^2 = A^{k+1}$.
\end{Th}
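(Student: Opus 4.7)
The plan is to establish the auxiliary identity $A^D A^\ast A = A^D$ first and then derive all remaining identities as consequences, using only the partial isometry relation $A A^\ast A = A$ (equivalently $A^\ast = A^\dagger$) together with the standard Drazin properties $A A^D = A^D A$, $A^D A A^D = A^D$, and $A^D A^{k+1} = A^k$.

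To prove $A^D A^\ast A = A^D$, I would invoke the observation from the preamble that partial isometries are SD, hence $A^D A^\ast A = A A^\ast A^D$. Since $A^\ast = A^\dagger$, the factor $A A^\ast = A A^\dagger$ is the orthogonal projector onto $\mathcal{R}(A)$; because $A^D = A(A^D)^2$, we have $\mathcal{R}(A^D) \subseteq \mathcal{R}(A)$, so this projector fixes $A^D$, yielding $A A^\ast A^D = A^D$. This gives the first equality in the displayed chain.

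For the second equality $A A^\ast (A^{D,\ast})^2 A = A^D$, I would unfold $A^{D,\ast} = A^D A A^\ast$ to obtain the nine-factor product $A A^\ast A^D A A^\ast A^D A A^\ast A$. Applying $A A^\ast A = A$ to the final three factors, then commuting $A^D A = A A^D$ once to expose a second occurrence of $A A^\ast A$ in the middle, and finally using $A^D A A^D = A^D$, reduces the expression to $A A^\ast A^D$, which equals $A^D$ by the already established identity.

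For part (b), the identity $A^k A^\ast A^2 = A^{k+1}$ is immediate from $A A^\ast A = A$ applied to the middle three factors when $k \geq 1$ (the case $k = 0$, in which $A$ is unitary, being direct). For $A^k A^D A^\ast A^2 = A^k$, I would insert the auxiliary identity $A^D A^\ast A = A^D$ into the middle, reducing the product to $A^k A^D A$, and then combine the commutativity $A^D A = A A^D$ with the Drazin relation $A^D A^{k+1} = A^k$ (equivalently $A^{k+1} A^D = A^k$) to conclude. The main obstacle is purely bookkeeping in the second equality of part (a): one must apply the partial isometry relation twice with an intermediate commutation, so identifying the correct grouping is the delicate step; once the auxiliary lemma is in hand, the rest is mechanical.
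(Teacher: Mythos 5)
Your proposal is correct and follows essentially the same route as the paper: reduce the nine-factor product via $AA^\ast A=A$ together with $AA^D=A^DA$ and $A^DAA^D=A^D$, and obtain the ``Moreover'' part from the auxiliary identity $A^DA^\ast A=A^D$ combined with $A^{k+1}A^D=A^k$. The only (harmless) variations are local: you justify $A^DA^\ast A=A^D$ via the SD property and the projector $AA^\dagger$ fixing $\mathcal{R}(A^D)$, where the paper simply computes $(A^D)^2AA^\dagger A=(A^D)^2A=A^D$, and you prove $A^kA^\ast A^2=A^{k+1}$ directly (with the unitary case $k=0$ treated separately) instead of deducing it from the first identity.
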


 \begin{proof}
To prove the left equality in the first assertion, since $A$ is a partial isometry,
it follows that $A A^\ast A = A$, and hence, by the fact that $ A A^D = A^D A $, we conclude that
\begin{eqnarray*}
A A^\ast (A^{D,\ast})^2 A & = & A A^\ast A^D A A^\ast A^D A A^\ast A \\
& = & A A^\ast A A^D A^\ast A^D A \\
& = & A A^D A^\ast A^D A \\
& = & A^D A A^\ast A A^D \\
& = & A^D;
\end{eqnarray*}
as required.

 To prove the right equality in this assertion, by the fact that $(A^D)^2 A = A^D $, we see that $A^D A^\ast A = (A^D)^2 A A^\dagger A = (A^D)^2 A = A^D.$
So, the proof of the first assertion is complete.

 For the second assertion, let $ {\rm ind}(A) = k $. So, by the right equality of the first assertion
and by using (\ref{D}), we have $
A^k A^D A^\ast A^2 = A^k A^D A
= A^{k+1} A^D = A^k$;
as required. Also, we can see that $ A^{k+1} A^D A^\ast A^2 = A^{k+1} $, and hence, by (\ref{D}), we have $ A^k A^\ast A^2 = A^{k+1} $;
completing the proof.
\end{proof}

 The partial isometry assumption in Theorem \ref{Drazin- Star} is necessary; see the following example.

 \begin{Ex}
Consider $ A = \begin{bmatrix}
2 & 0 & 0 \\
-1 & 1 & 1 \\
-1 & -1 & -1 \\
\end{bmatrix} $. Obviously, $A$ is not a partial isometry. By a simple computation, we have
\begin{center}
$ A^D = \dfrac{1}{2} \begin{bmatrix}
1 & 0 & 0 \\
-1 & 0 & 0 \\
0 & 0 & 0 \\
\end{bmatrix} $, and $ A^D A^\ast A =\begin{bmatrix}
3 & 0 & 0 \\
-3 & 0 & 0 \\
0 & 0 & 0 \\
\end{bmatrix} $,
\end{center}
which are not equal.
\end{Ex}

It is clear that for normal partial isometries $A$, it holds that $A^D$ is also partial isometry; because these matrices are EP.
But, in general, the Drazin inverse, core EP inverse, the MPD inverse, and the DMP inverse of a partial isometry
may not be partial isometry; see the
following example. Also, in this example, we find a nonnormal matrix $A$ for which $A^D$ is a partial isometry.

 \begin{Ex}
\begin{enumerate}[(i)]
\item Let $ A = \dfrac{1}{2} \begin{bmatrix}
2 & 0 & 0 \\
0 & \sqrt{3} & 0 \\
0 & 1 & 0 \\
\end{bmatrix} $. Obviously, $A$ is a partial isometry. Also, $ A^D = \dfrac{1}{3} \begin{bmatrix}
3 & 0 & 0 \\
0 & 2 \sqrt{3} & 0 \\
0 & 2 & 0 \\
\end{bmatrix},$ which is not a partial isometry; because
\begin{center}
$(A^D)^\dagger = \dfrac{1}{8} \begin{bmatrix}
8 & 0 & 0 \\
0 & 3 \sqrt{3} & 3 \\
0 & 0 & 0 \\
\end{bmatrix} $.
\end{center}
Also, we have that
\begin{center}
$ A^{D,\dagger} = A^D A A^\dagger = \dfrac{1}{6} \begin{bmatrix}
6 & 0 & 0 \\
0 & 3 \sqrt{3} & 3 \\
0 & 3 & \sqrt{3} \\
\end{bmatrix} $ and $ (A^{D,\dagger})^\dagger = \dfrac{1}{8} \begin{bmatrix}
8 & 0 & 0 \\
0 & 3 \sqrt{3} & 3 \\
0 & 3 & \sqrt{3} \\
\end{bmatrix} $.
\end{center}
Thus, $ (A^{D,\dagger})^\ast \neq (A^{D,\dagger})^\dagger $, and hence, $ A^{D,\dagger} $ is not a partial isometry.
Moreover,
\begin{center}
$ A^{\dagger,D} = A^\dagger A A^D = \dfrac{1}{3} \begin{bmatrix}
3 & 0 & 0 \\
0 & 2 \sqrt{3} & 0 \\
0 & 0 & 0 \\
\end{bmatrix} $, and $ (A^{\dagger,D})^\dagger = \dfrac{1}{2} \begin{bmatrix}
2 & 0 & 0 \\
0 & \sqrt{3} & 0 \\
0 & 0 & 0 \\
\end{bmatrix} $.
\end{center}
Hence, $ (A^{\dagger,D})^\ast \neq (A^{\dagger,D})^\dagger $, and thus, $ A^{\dagger,D} $ is not a partial isometry.
Since ${\rm ind}(A) = 1 $, we can see, by Lemma \ref{CEP 1}$(iv)$, that $ A^{\scriptsize\textcircled{$\dagger$}} = A (A^2)^\dagger = \dfrac{1}{6} \begin{bmatrix}
6 & 0 & 0 \\
0 & 3 \sqrt{3} & 3 \\
0 & 3 & \sqrt{3} \\
\end{bmatrix},$ which is not a partial isometry.

 \item Let $ A = \begin{bmatrix}
0 & 1 & 0 & 0 \\
1 & 0 & 0 & 0 \\
0 & 0 & 0 & 1 \\
0 & 0 & 0 & 0 \\
\end{bmatrix} $. Obviously, $A$ is not a normal matrix. Also, we have that $ A^D = \begin{bmatrix}
0 & 1 & 0 & 0 \\
1 & 0 & 0 & 0 \\
0 & 0 & 0 & 0 \\
0 & 0 & 0 & 0 \\
\end{bmatrix},$ which is a partial isometry.
\end{enumerate}
\end{Ex}

 The next theorem shows when the Drazin inverse, the DMP inverse, and the MPD inverse of a partial isometry are also partial isometry.

 \begin{Th} \label{Drazin Inverse of P.I. matrix} For partial isometry $ A\in\mathbb{M}_n(\mathbb{C}) $, the following assertions
are true:
\begin{enumerate}[(i)]
\item $ A^D $ is partial isometry if and only if $ (A^\dagger)^D = (A^D)^\dagger $;
\item If $ A^D $ is a partial isometry, it holds that \\
$(a) \ A^{D,\dagger} $ is partial isometry iff $ (A^{D,\dagger})^\dagger = (A^\dagger)^{\dagger,D} $; \\
$(b) \ A^{\dagger,D} $ is partial isometry iff $ (A^{\dagger,D})^\dagger = (A^\dagger)^{D,\dagger} $.
\end{enumerate}

 \end{Th}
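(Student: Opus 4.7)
The plan rests on two ingredients. First, the basic characterization: a matrix $B$ is a partial isometry iff $B^\dagger = B^\ast$. Second, the identity $(B^D)^\ast = (B^\ast)^D$, which follows from the uniqueness of the Drazin inverse—taking conjugate transposes of the three equations in (\ref{D}) for $B$ shows that $X^\ast$ satisfies those equations for $B^\ast$. Since $A$ is a partial isometry, I will also use $A^\ast = A^\dagger$ and $(A^\dagger)^\ast = A$ throughout without further comment. These three facts are essentially the only tools required.

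For part $(i)$, I expand the definition: $A^D$ is a partial isometry iff $(A^D)^\dagger = (A^D)^\ast$. Substituting $(A^D)^\ast = (A^\ast)^D = (A^\dagger)^D$ rewrites this directly as $(A^D)^\dagger = (A^\dagger)^D$, which is the stated equivalent condition.

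For part $(ii)(a)$, I start from $A^{D,\dagger} = A^D A A^\dagger$ (Lemma~\ref{DMP 1}$(i)$). Taking conjugate transpose and applying the three identities above yields
\[
(A^{D,\dagger})^\ast \;=\; (A^\dagger)^\ast\, A^\ast\, (A^D)^\ast \;=\; A\, A^\dagger\, (A^\dagger)^D.
\]
Applying Lemma~\ref{DMP 1}$(ii)$ to the matrix $B = A^\dagger$ (so that $B^\dagger = A$ and $B^D = (A^\dagger)^D$) identifies the right-hand side as $(A^\dagger)^{\dagger,D}$. Hence the partial-isometry condition $(A^{D,\dagger})^\dagger = (A^{D,\dagger})^\ast$ becomes exactly $(A^{D,\dagger})^\dagger = (A^\dagger)^{\dagger,D}$. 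Part $(ii)(b)$ is symmetric: I start from $A^{\dagger,D} = A^\dagger A A^D$ (Lemma~\ref{DMP 1}$(ii)$), take conjugate transpose to obtain $(A^\dagger)^D\, A^\dagger\, A$, and recognize this as $(A^\dagger)^{D,\dagger}$ via Lemma~\ref{DMP 1}$(i)$ applied to $A^\dagger$.

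The only real obstacle is bookkeeping: one must keep straight which superscript ($\dagger$, $\ast$, or $D$) is applied to which matrix and exploit $A^\dagger = A^\ast$ and $(A^\dagger)^\dagger = A$ without conflating them. It may also be worth recording a small remark: the derivations of $(a)$ and $(b)$ never actually invoke the hypothesis that $A^D$ is a partial isometry—the equivalences $(A^{D,\dagger})^\ast = (A^\dagger)^{\dagger,D}$ and $(A^{\dagger,D})^\ast = (A^\dagger)^{D,\dagger}$ hold for \emph{any} partial isometry $A$. I would therefore organize the proof around the master identity $(B^D)^\ast = (B^\ast)^D$ and treat $(i)$, $(ii)(a)$, $(ii)(b)$ as three parallel applications, mentioning the above as a concluding observation.
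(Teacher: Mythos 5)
Your proof is correct, and for part $(i)$ it is essentially the paper's argument: both reduce $(A^D)^\dagger=(A^D)^\ast$ to $(A^D)^\dagger=(A^\dagger)^D$ via $A^\ast=A^\dagger$ and $(A^\ast)^D=(A^D)^\ast$ (the paper leaves the latter identity implicit; you rightly justify it by uniqueness of the Drazin inverse). For part $(ii)$ the mechanics are the same — expand $A^{D,\dagger}=A^DAA^\dagger$ and $A^{\dagger,D}=A^\dagger AA^D$ via Lemma \ref{DMP 1}, conjugate, and recognize $(A^\dagger)^{\dagger,D}$, resp.\ $(A^\dagger)^{D,\dagger}$ — but you handle the term $(A^D)^\ast$ differently. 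The paper converts $(A^D)^\ast$ to $(A^\dagger)^D$ by first using the hypothesis that $A^D$ is a partial isometry (so $(A^D)^\ast=(A^D)^\dagger$) and then invoking part $(i)$, whereas you convert it directly through the general identity $(A^D)^\ast=(A^\ast)^D=(A^\dagger)^D$. Your concluding remark is therefore accurate and is a genuine (if mild) sharpening: the equalities $(A^{D,\dagger})^\ast=(A^\dagger)^{\dagger,D}$ and $(A^{\dagger,D})^\ast=(A^\dagger)^{D,\dagger}$, and hence both equivalences in $(ii)$, hold for every partial isometry $A$, so the additional assumption that $A^D$ be a partial isometry — which the paper's chain actually uses — can be dropped. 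Both routes buy the same computation; yours buys the stronger statement at no extra cost.
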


 \begin{proof}
To prove the assertion $(i)$, since $A$ is a partial isometry, it follows that
\begin{center}
$(A^\dagger)^D = (A^D)^\dagger \Leftrightarrow (A^\ast)^D = (A^D)^\dagger $.
\end{center}
So, the result in $(i)$ holds.

 To prove part $(a)$ in $ (ii) $, we assume that $A$ and $A^D$ are partial isometries. Then by Lemma \ref{DMP 1}($(i)$ and $(ii)$)
and using part $(i)$, we see that
\begin{eqnarray*}
(A^{D,\dagger})^\dagger = (A^{D,\dagger})^\ast & \Leftrightarrow & (A^{D,\dagger})^\dagger = (A^D A A^\dagger)^\ast \\
& \Leftrightarrow & (A^{D,\dagger})^\dagger = (A^D A A^\ast)^\ast \\
& \Leftrightarrow & (A^{D,\dagger})^\dagger = A A^\ast (A^D)^\ast \\
& \Leftrightarrow & (A^{D,\dagger})^\dagger = A A^\dagger (A^D)^\dagger \\
& \Leftrightarrow & (A^{D,\dagger})^\dagger = (A^\dagger)^\dagger (A^\dagger) (A^\dagger)^D \\
& \Leftrightarrow & (A^{D,\dagger})^\dagger = (A^\dagger)^{\dagger,D}.
\end{eqnarray*}
So, the assertion holds. By the same manner as in the above proof, part $(b)$ in $(ii)$ also holds; completing the proof.
\end{proof}
By Theorem \ref{Drazin Inverse of P.I. matrix}$(i)$, for partial isometries $A$, the Drazin inverse $A^D$ is a partial isometry if
and only if $ (A^\dagger)^D = (A^D)^\dagger $. Note that this equality also holds for EP matrices, but in general, it may fail for non-partial
isometries; see the following example.

\begin{Ex}\label{P.I 1}
Considering $ A = \begin{bmatrix}
0 & 1 & 1 \\
0 & 0 & 0 \\
0 & 0 & 1 \\
\end{bmatrix} $, we can see that $A$ is not partial isometry. Now, a simple computation shows that \\
$ A^D = \begin{bmatrix}
0 & 0 & 1 \\
0 & 0 & 0 \\
0 & 0 & 1 \\
\end{bmatrix} $, $ A^\dagger = \begin{bmatrix}
0 & 0 & 0 \\
1 & 0 & -1 \\
0 & 0 & 1 \\
\end{bmatrix} $,
$ (A^D)^\dagger = \dfrac{1}{2} \begin{bmatrix}
0 & 0 & 0 \\
0 & 0 & 0 \\
1 & 0 & 1 \\
\end{bmatrix} $, and $ (A^\dagger)^D = \begin{bmatrix}
0 & 0 & 0 \\
0 & 0 & -1 \\
0 & 0 & 1 \\
\end{bmatrix} $.
So, $ (A^D)^\dagger \neq (A^\dagger)^D $.
\end{Ex}

 The following result shows when $A^{\scriptsize\textcircled{$\dagger$}}$ is a partial isometry.

 \begin{Th}\label{A^k is P.I.}
For $ A\in\mathbb{M}_n(\mathbb{C})$ with $k = ind(A)$, if $A^k$ and $A^{k+1}$ are partial isometries, then
$A^{\scriptsize\textcircled{$\dagger$}}$ is partial isometry.
\end{Th}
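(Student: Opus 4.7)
The plan is to verify the definitional characterization $B B^\ast B = B$ for $B = A^{\scriptsize\textcircled{$\dagger$}}$, using the compact expression $A^{\scriptsize\textcircled{$\dagger$}} = A^k (A^{k+1})^\dagger$ from Lemma \ref{CEP 1}$(iv)$ together with the two partial isometry hypotheses on $A^k$ and $A^{k+1}$.

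First I would substitute $(A^{k+1})^\dagger = (A^{k+1})^\ast$, which is available since $A^{k+1}$ is a partial isometry. This yields the clean formula $A^{\scriptsize\textcircled{$\dagger$}} = A^k (A^{k+1})^\ast$ and, by taking adjoints, $(A^{\scriptsize\textcircled{$\dagger$}})^\ast = A^{k+1}(A^k)^\ast$. From here the product $A^{\scriptsize\textcircled{$\dagger$}}(A^{\scriptsize\textcircled{$\dagger$}})^\ast A^{\scriptsize\textcircled{$\dagger$}}$ expands to
\[
A^k (A^{k+1})^\ast A^{k+1} (A^k)^\ast A^k (A^{k+1})^\ast .
\]

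Next I would simplify this expression by collapsing blocks from left to right. Using Lemma \ref{CEP 1}$(i)$, namely $A^{\scriptsize\textcircled{$\dagger$}} A^{k+1} = A^k$, and the formula $A^{\scriptsize\textcircled{$\dagger$}} = A^k (A^{k+1})^\ast$ just derived, I get the identity $A^k (A^{k+1})^\ast A^{k+1} = A^k$. Applying this to the first three factors reduces the product to $A^k (A^k)^\ast A^k (A^{k+1})^\ast$. The middle block $A^k (A^k)^\ast A^k$ equals $A^k$ because $A^k$ is a partial isometry, so the product collapses to $A^k (A^{k+1})^\ast = A^{\scriptsize\textcircled{$\dagger$}}$, giving $A^{\scriptsize\textcircled{$\dagger$}}(A^{\scriptsize\textcircled{$\dagger$}})^\ast A^{\scriptsize\textcircled{$\dagger$}} = A^{\scriptsize\textcircled{$\dagger$}}$. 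Thus $A^{\scriptsize\textcircled{$\dagger$}}$ is a partial isometry.

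There is no real obstacle here: the proof is essentially a bookkeeping exercise once one picks the right representation of $A^{\scriptsize\textcircled{$\dagger$}}$. The only subtle point is recognizing which items of Lemma \ref{CEP 1} to combine; in particular item $(iv)$ is what converts the hypothesis on $A^{k+1}$ directly into a statement about $A^{\scriptsize\textcircled{$\dagger$}}$, and item $(i)$ is what produces the key cancellation $A^k (A^{k+1})^\ast A^{k+1} = A^k$ that is needed beyond the two partial isometry hypotheses.
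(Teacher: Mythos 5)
Your proof is correct, and it takes a genuinely different (though closely related) route from the paper's. The paper starts from Lemma \ref{CEP 1}$(vi)$, $A^{\scriptsize\textcircled{$\dagger$}} = (A^{k+1}(A^k)^\dagger)^\dagger$, so that $(A^{\scriptsize\textcircled{$\dagger$}})^\dagger = A^{k+1}(A^k)^\dagger$ explicitly; it then converts daggers into stars using both partial isometry hypotheses and recognizes the result as $(A^k(A^{k+1})^\dagger)^\ast = (A^{\scriptsize\textcircled{$\dagger$}})^\ast$ via item $(iv)$, i.e.\ it verifies the condition $B^\dagger = B^\ast$ directly. You instead verify the equivalent defining equation $BB^\ast B = B$ (which the paper records in the introduction), never invoking item $(vi)$ nor any formula for $(A^{\scriptsize\textcircled{$\dagger$}})^\dagger$: your ingredients are the representation $A^{\scriptsize\textcircled{$\dagger$}} = A^k(A^{k+1})^\dagger = A^k(A^{k+1})^\ast$, the cancellation $A^k(A^{k+1})^\ast A^{k+1} = A^{\scriptsize\textcircled{$\dagger$}}A^{k+1} = A^k$ from item $(i)$, and the hypothesis on $A^k$ in the form $A^k(A^k)^\ast A^k = A^k$; each of these steps checks out. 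What each approach buys: the paper's chain is shorter and yields, as a byproduct, the explicit Moore--Penrose inverse $(A^{\scriptsize\textcircled{$\dagger$}})^\dagger = A^{k+1}(A^k)^\dagger$ of the core EP inverse, whereas your argument is slightly more elementary, avoiding manipulation of iterated Moore--Penrose inverses and relying only on the weaker triple-product characterization of partial isometries together with items $(i)$ and $(iv)$ of Lemma \ref{CEP 1}.
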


 \begin{proof}
By Lemma \ref{CEP 1}($(vi)$ and $(iv)$), we see that
\begin{eqnarray*}
(A^{\scriptsize\textcircled{$\dagger$}})^\dagger & = & A^{k+1} (A^k)^\dagger \\
& = & ((A^{k+1})^\dagger)^\dagger (A^k)^\dagger \\
& = & ((A^{k+1})^\ast)^\dagger (A^k)^\dagger \\
& = & ((A^{k+1})^\dagger)^\ast (A^k)^\ast \\
& = & (A^k (A^{k+1})^\dagger)^\ast \\
& = & (A^{\scriptsize\textcircled{$\dagger$}})^\ast;
\end{eqnarray*}
as required.
\end{proof}

 The following example shows that the converse of Theorem \ref{A^k is P.I.} does not hold in general.

 \begin{Ex}
Consider the matrix $A$ as in Example \ref{P.I 1}. It is clear that ${\rm ind}(A) = 2$. By a simple computation, we have
$ A^{\scriptsize\textcircled{$\dagger$}} = \dfrac{1}{2} \begin{bmatrix}
1 & 0 & 1 \\
0 & 0 & 0 \\
1 & 0 & 1 \\
\end{bmatrix} $. Obviously, $ A^{\scriptsize\textcircled{$\dagger$}} $ is a partial isometry.
But
\begin{center}
$ A^2 = A^3 = \begin{bmatrix}
0 & 0 & 1 \\
0 & 0 & 0 \\
0 & 0 & 1 \\
\end{bmatrix} $, and $ (A^2)^\dagger = (A^3)^\dagger = \dfrac{1}{2} \begin{bmatrix}
0 & 0 & 0 \\
0 & 0 & 0 \\
1 & 0 & 1 \\
\end{bmatrix} $.
\end{center}
Hence, $ A^2$ and $ A^3$ are not partial isometries.
\end{Ex}

\begin{Th}
For CEPD matrix $ A\in\mathbb{M}_n(\mathbb{C})$ with $k= ind(A)$ and the core part $A_1$, if
$A^k$ and $A^{k+1}$ are partial isometries, then the following assertions are true:
\begin{enumerate}[(i)]
\item $ A^D (A^{\scriptsize\textcircled{$\dagger$}})^\ast A^{\scriptsize\textcircled{$\dagger$}} =
(A^{\scriptsize\textcircled{$\dagger$}})^2 A^{k+1} (A^k)^\dagger$;
\item $ (A^{\scriptsize\textcircled{$\dagger$}})^\ast A^k = A^D A^k = A_1 A^k$.
\end{enumerate}
\end{Th}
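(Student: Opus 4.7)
The plan is to use the CEPD hypothesis to replace $A^D$ by $A^{\scriptsize\textcircled{$\dagger$}}$ throughout (Theorem~\ref{CEPD equivalent}, $(i)\Leftrightarrow(vi)$) and then exploit two structural facts about $A^{\scriptsize\textcircled{$\dagger$}}$: it is EP by Lemma~\ref{CEP 1}$(vii)$, and under the hypotheses that $A^k$ and $A^{k+1}$ are partial isometries it is itself a partial isometry by Theorem~\ref{A^k is P.I.}. Any matrix $B$ that is simultaneously EP and a partial isometry satisfies $B^{\ast}=B^{\dagger}=B^{\#}$; coupling this with the identity $(A^{\scriptsize\textcircled{$\dagger$}})^{\#}=A^{k+1}(A^k)^{\dagger}$ established inside the proof of Theorem~\ref{CEPD equivalent}, I obtain the concrete formula $(A^{\scriptsize\textcircled{$\dagger$}})^{\ast}=A^{k+1}(A^k)^{\dagger}$, which serves as the workhorse for both parts.

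For part~$(i)$, substituting $A^D=A^{\scriptsize\textcircled{$\dagger$}}$ turns the left-hand side into $A^{\scriptsize\textcircled{$\dagger$}}(A^{\scriptsize\textcircled{$\dagger$}})^{\ast}A^{\scriptsize\textcircled{$\dagger$}}$; swapping $(A^{\scriptsize\textcircled{$\dagger$}})^{\ast}$ for $(A^{\scriptsize\textcircled{$\dagger$}})^{\dagger}$ and invoking the Moore--Penrose identity $XX^{\dagger}X=X$ collapses this to $A^{\scriptsize\textcircled{$\dagger$}}$. For the right-hand side, rewrite $A^{k+1}(A^k)^{\dagger}=(A^{\scriptsize\textcircled{$\dagger$}})^{\#}$ and apply the group-inverse identity $B^{2}B^{\#}=B$ (which follows from $BB^{\#}B=B$ together with $BB^{\#}=B^{\#}B$); this produces $A^{\scriptsize\textcircled{$\dagger$}}$ again, so both sides agree.

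For part~$(ii)$, the outer two equalities yield to direct computation. For the leftmost,
\[
(A^{\scriptsize\textcircled{$\dagger$}})^{\ast}A^k = A^{k+1}(A^k)^{\dagger}A^k = A\cdot\bigl[A^k(A^k)^{\dagger}A^k\bigr] = A\cdot A^k = A^{k+1},
\]
where the bracketed expression collapses by the partial isometry of $A^k$. For the rightmost, $A_1A^k = A^2A^DA^k = A\cdot(A^DA^{k+1}) = A\cdot A^k = A^{k+1}$, using $A_1=A^2A^D$ from Lemma~\ref{DMP 1}$(vi)$ and the Drazin identity~(\ref{D}). For the middle expression $A^DA^k$, I would pass through $A^D=A^{\scriptsize\textcircled{$\dagger$}}$, apply Lemma~\ref{CEP 1}$(iv)$ to rewrite $A^{\scriptsize\textcircled{$\dagger$}}A^k = A^k(A^{k+1})^{\dagger}A^k$, and then use the partial isometry of $A^{k+1}$ to replace $(A^{k+1})^{\dagger}$ by $(A^{k+1})^{\ast}$, reducing by combination with Lemma~\ref{CEP 1}$(ix)$ and the CEPD identification $AA^D=AA^{\scriptsize\textcircled{$\dagger$}}=A^k(A^k)^{\dagger}$ to funnel the expression back to $A^{k+1}$.

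The step I expect to be the main obstacle is the middle equality $A^DA^k=A^{k+1}$. Unlike the outer two identities, it is not pure Moore--Penrose bookkeeping; it is the one place where the partial-isometry hypotheses on $A^k$ and $A^{k+1}$ must both interact with the CEPD structure, and I anticipate the cleanest route will be to leverage the already-established identities $(A^{\scriptsize\textcircled{$\dagger$}})^{\ast}A^k=A^{k+1}$ and $AA^D=A^k(A^k)^{\dagger}$ rather than manipulating $A^DA^k$ directly.
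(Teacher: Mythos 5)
Your part~$(i)$ is correct and is essentially the paper's own argument: both rest on Theorem~\ref{A^k is P.I.} (so $A^{\scriptsize\textcircled{$\dagger$}}$ is a partial isometry), Lemma~\ref{CEP 1}$(vii)$, the CEPD identification $A^D=A^{\scriptsize\textcircled{$\dagger$}}$ from Theorem~\ref{CEPD equivalent}, and the formula $(A^{\scriptsize\textcircled{$\dagger$}})^\ast=(A^{\scriptsize\textcircled{$\dagger$}})^\dagger=A^{k+1}(A^k)^\dagger$; you reduce both sides to $A^{\scriptsize\textcircled{$\dagger$}}$, while the paper commutes $(A^{\scriptsize\textcircled{$\dagger$}})^\ast$ past $A^{\scriptsize\textcircled{$\dagger$}}$ using normality, a cosmetic difference. (Minor point: $A^k(A^k)^\dagger A^k=A^k$ is a Moore--Penrose identity, not a consequence of $A^k$ being a partial isometry.)

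In part~$(ii)$ you prove the two outer identities, $(A^{\scriptsize\textcircled{$\dagger$}})^\ast A^k=A^{k+1}$ and $A_1A^k=A^{k+1}$, correctly, but the middle equality $A^DA^k=A^{k+1}$, which you flag as the main obstacle and only sketch, is a genuine gap in your proposal --- and it cannot be closed, because that equality is false under the stated hypotheses. Take $A=\mathrm{diag}(i,0)$: then $k=1$, $A$ is normal (hence CEPD), $A^k=A$ and $A^{k+1}=\mathrm{diag}(-1,0)$ are partial isometries, yet $A^DA^k=\mathrm{diag}(1,0)$ while $(A^{\scriptsize\textcircled{$\dagger$}})^\ast A^k=A_1A^k=\mathrm{diag}(-1,0)$. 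The defect is in the paper, not in your instinct: in the paper's chain for the first equality of $(ii)$ the final step in effect replaces $((A^D)^D)^2A^D$ by $A^D$, whereas $((A^D)^D)^2A^D=(A^D)^\#=A_1$ by Lemma~\ref{DMP 1}$(vi)$, so that chain actually proves $(A^{\scriptsize\textcircled{$\dagger$}})^\ast A^k=A_1A^k$ (your identity); similarly, the paper's justification of the second equality only yields $(A^{\scriptsize\textcircled{$\dagger$}})^DA^k=A_1A^k$, and $(A^{\scriptsize\textcircled{$\dagger$}})^D=(A^D)^D=A_1$ is not $A^D$ in general. The version your computations support, and the one the paper's manipulations genuinely establish, is $(A^{\scriptsize\textcircled{$\dagger$}})^\ast A^k=A_1A^k=A^{k+1}$; the term $A^DA^k$ should be removed (it coincides with the others only under an extra hypothesis such as $A_1^3=A_1$, i.e.\ the nonzero eigenvalues of $A$ being $\pm1$ in this setting), so do not spend further effort trying to ``funnel'' $A^DA^k$ back to $A^{k+1}$.
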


 \begin{proof}
To prove the assertion $(i)$, by Lemma \ref{CEP 1}$(vii)$, Theorem \ref{A^k is P.I.} and using the fact that EP
partial isometries are normal, we conclude that $A^{\scriptsize\textcircled{$\dagger$}}$ is a normal matrix. Hence, by Theorem
\ref{CEPD equivalent}$(vi)$ and Lemma \ref{CEP 1}$(vi)$, we have
\begin{eqnarray*}
A^D (A^{\scriptsize\textcircled{$\dagger$}})^\ast A^{\scriptsize\textcircled{$\dagger$}} & = & A^D A^{\scriptsize\textcircled{$\dagger$}} (A^{\scriptsize\textcircled{$\dagger$}})^\ast \\
& = & (A^{\scriptsize\textcircled{$\dagger$}})^2 (A^{\scriptsize\textcircled{$\dagger$}})^\ast \\
& = & (A^{\scriptsize\textcircled{$\dagger$}})^2 A^{k+1} (A^k)^\dagger.
\end{eqnarray*}
So, the result in $(i)$ holds. To prove the first equality in assertion $(ii)$, by Lemma \ref{CEP 1}$(vii)$, using
Theorems \ref{A^k is P.I.}, \ref{Drazin- Star} and \ref{CEPD equivalent}$(vi)$ and relation (\ref{D}), we have
\begin{eqnarray*}
(A^{\scriptsize\textcircled{$\dagger$}})^\ast A^k & = & (A^{\scriptsize\textcircled{$\dagger$}})^D A^k \\
& = & (A^{\scriptsize\textcircled{$\dagger$}})^D (A^{\scriptsize\textcircled{$\dagger$}})^\ast A^{\scriptsize\textcircled{$\dagger$}} A^k \\
& = & ((A^{\scriptsize\textcircled{$\dagger$}})^D)^2 A^D A^k \\
& = & ((A^D)^D)^2 A^D A^k \\
& = & A^D A^k;
\end{eqnarray*}
as required. To prove the second equality in this assertion, by Theorem \ref{CEPD equivalent}$(vi)$ and Lemma \ref{DMP 1}$(vi)$,
obviously, $(A^{\scriptsize\textcircled{$\dagger$}})^D A^k = A_1 A^k$.
So, the result in $(ii)$ also holds; completing the proof.
\end{proof}

 In the following theorem, we find some conditions for which a partial isometry is CEPD.

 \begin{Th}\label{P.I. and CEPD}
For partial isometry $ A\in\mathbb{M}_n(\mathbb{C})$ with $k = ind(A)$, if
any of both cases:
$ (a) \ A^k $ is Hermitian and a partial isometry, or
$ (b) \ A^{k+1} $ is Hermitian and a partial isometry, holds, then $A$ is CEPD.
\end{Th}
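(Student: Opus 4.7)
The plan is to verify the defining identity $A^{\scriptsize\textcircled{$\dagger$}} A^D = A^D A^{\scriptsize\textcircled{$\dagger$}}$ from Definition \ref{CEPD} directly, by reducing $A^{\scriptsize\textcircled{$\dagger$}}$ in either case to a nonnegative power of $A$. The key observation I would exploit is that if a matrix $M$ is both Hermitian and a partial isometry, then $M^\dagger = M^\ast = M$; hence in case $(a)$ we have $(A^k)^\dagger = A^k$, and in case $(b)$ we have $(A^{k+1})^\dagger = A^{k+1}$. Once $A^{\scriptsize\textcircled{$\dagger$}}$ is exhibited as a power of $A$, the desired commutation is automatic because every power of $A$ commutes with $A^D$ (most conveniently, because $A^D$ is a polynomial in $A$).

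For case $(a)$, I would apply Lemma \ref{CEP 1}$(iii)$ with $m=k$ to write $A^{\scriptsize\textcircled{$\dagger$}} = A^D A^k (A^k)^\dagger$, substitute $(A^k)^\dagger = A^k$, and then collapse $A^D A^{2k}$ to $A^{2k-1}$ using the Drazin axiom $A^D A^{k+1} = A^k$ from (\ref{D}), iterated to $A^D A^j = A^{j-1}$ for every $j \geq k+1$ (the degenerate case $k=0$ is trivial, as then $A$ is invertible and $A^{\scriptsize\textcircled{$\dagger$}} = A^D = A^{-1}$). For case $(b)$, I would apply Lemma \ref{CEP 1}$(iv)$, namely $A^{\scriptsize\textcircled{$\dagger$}} = A^k (A^{k+1})^\dagger$, and substitute $(A^{k+1})^\dagger = A^{k+1}$ to obtain $A^{\scriptsize\textcircled{$\dagger$}} = A^{2k+1}$ with no further bookkeeping.

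In both cases $A^{\scriptsize\textcircled{$\dagger$}}$ ends up being a nonnegative power of $A$, so it commutes with $A^D$ by virtue of $A A^D = A^D A$, and hence $A$ is CEPD by Definition \ref{CEPD}. I do not expect any real obstacle; the only mildly subtle point is the power-reduction $A^D A^{2k} = A^{2k-1}$ in case $(a)$, which needs the harmless hypothesis $k\geq 1$ and whose $k=0$ boundary is absorbed separately.
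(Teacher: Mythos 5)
Your proposal is correct and follows essentially the same route as the paper: both hinge on the observation that a Hermitian partial isometry equals its Moore--Penrose inverse, so $(A^k)^\dagger=A^k$ in case $(a)$ and $(A^{k+1})^\dagger=A^{k+1}$ in case $(b)$, combined with Lemma \ref{CEP 1}$(iii)$, respectively Lemma \ref{CEP 1}$(iv)$. The only difference is cosmetic: you collapse $A^{\scriptsize\textcircled{$\dagger$}}$ to an explicit power of $A$ (namely $A^{2k-1}$ for $k\geq 1$, resp.\ $A^{2k+1}$) and then invoke commutation of powers of $A$ with $A^D$, whereas the paper checks $A^{\scriptsize\textcircled{$\dagger$}}A^D=A^DA^{\scriptsize\textcircled{$\dagger$}}$ directly via $A^DA^k=A^kA^D$.
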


\begin{proof}
At first, we assume that $A^k$ is a partial isometry and Hermitian. Hence, $ (A^k)^\dagger = A^k$.
So, by Lemma \ref{CEP 1}$(iii)$ and the fact that $ A^D A^k = A^k A^D $, we can see that
\begin{eqnarray*}
A^{\scriptsize\textcircled{$\dagger$}} A^D & = & A^D A^k (A^k)^\dagger A^D \\
& = & A^D A^k A^k A^D \\
& = & A^D A^k A^D A^k \\
& = & A^D (A^D A^k (A^k)^\dagger) \\
& = & A^D A^{\scriptsize\textcircled{$\dagger$}};
\end{eqnarray*}
as required. Secondly, we assume that $A^{k+1}$ is a partial isometry and Hermitian. Hence,
$ (A^{k+1})^\dagger = A^{k+1} $. Thus, by Lemma \ref{CEP 1}$(iv)$ and the fact that $ A^D A^k = A^k A^D $, we have
\begin{eqnarray*}
A^D A^{\scriptsize\textcircled{$\dagger$}} & = & A^D A^k (A^{k+1})^\dagger \\
& = & A^k A^D (A^{k+1})^\dagger \\
& = & A^k A^D A^{k+1} \\
& = & A^k A^{k+1} A^D \\
& = & A^k (A^{k+1})^\dagger A^D \\
& = & A^{\scriptsize\textcircled{$\dagger$}} A^D.
\end{eqnarray*}
Thus, the proof is complete.
\end{proof}

 Note that if $A$ is a CEPD matrix, then $A^k$ and $A^{k+1}$ may not be partial isometries, where $k={\rm ind}(A)$.
Hence, in view of the following example, the converse of Theorem \ref{P.I. and CEPD} does not hold in general.

\begin{Ex}
Let $ A = \begin{bmatrix}
1 & 2 & 0 & 0 \\
2 & 1 & 0 & 0 \\
0 & 0 & 0 & 1 \\
0 & 0 & 0 & 0 \\
\end{bmatrix} $. Obviously, ${\rm ind}(A) = 2 $ and $A$ is $2-$EP. Hence,
by Theorem \ref{CEP and D}, $A$ is CEPD. By a simple computation, we have
\begin{center}
$ A^2 = \begin{bmatrix}
5 & 4 & 0 & 0 \\
4 & 5 & 0 & 0 \\
0 & 0 & 0 & 0 \\
0 & 0 & 0 & 0 \\
\end{bmatrix} $ and $ (A^2)^\dagger = \dfrac{1}{9} \begin{bmatrix}
5 & -4 & 0 & 0 \\
-4 & 5 & 0 & 0 \\
0 & 0 & 0 & 0 \\
0 & 0 & 0 & 0 \\
\end{bmatrix} $.
\end{center}
Thus, $A^2$ is not a partial isometry. Moreover,
\begin{center}
$ A^3 = \begin{bmatrix}
13 & 14 & 0 & 0 \\
14 & 13 & 0 & 0 \\
0 & 0 & 0 & 0 \\
0 & 0 & 0 & 0 \\
\end{bmatrix} $ and $ (A^3)^\dagger = \dfrac{1}{27} \begin{bmatrix}
-13 & 14 & 0 & 0 \\
14 & -13 & 0 & 0 \\
0 & 0 & 0 & 0 \\
0 & 0 & 0 & 0 \\
\end{bmatrix} $.
\end{center}
Thus, $A^3$ is not a partial isometry.
\end{Ex}

 The final example shows that if $A$ is a partial isometry with $k= {\rm ind}(A)$, then $A^k$ and $A^{k+1}$ may not be partial isometries.
Also, it shows that a partial isometry may not be CEPD. Note that by Theorem \ref{normal is CEPD}, normal matrices are CEPD, and so, obviously,
we can find CEPD matrices which are not partially isometric.

 \begin{Ex}
Let $A = \dfrac{1}{2} \begin{bmatrix}
0 & 0 & 0 & 0 & 0 & 0 \\
\sqrt{2} & -\sqrt{2} & 0 & 0 & 0 & 0 \\
1 & 1 & 0 & 0 & 0 & 0 \\
1 & 1 & 0 & 0 & 0 & 0 \\
0 & 0 & 0 & 0 & 0 & \sqrt{2} \\
0 & 0 & 0 & 0 & 0 & \sqrt{2} \\
\end{bmatrix} $. Obviously, $A$ is a partial isometry and ${\rm ind}(A) = 2 $. By a simple computation, we have
\begin{center}
$(A^2)^\ast = \dfrac{1}{4} \begin{bmatrix}
0 & -2 & \sqrt{2} & \sqrt{2} & 0 & 0 \\
0 & 2 & -\sqrt{2} & -\sqrt{2} & 0 & 0 \\
0 & 0 & 0 & 0 & 0 & 0 \\
0 & 0 & 0 & 0 & 0 & 0 \\
0 & 0 & 0 & 0 & 0 & 0 \\
0 & 0 & 0 & 0 & 2 & 2 \\
\end{bmatrix} $, and
\end{center}
\begin{center}
$(A^2)^\dagger = \dfrac{1}{4} \begin{bmatrix}
0 & -2 & \sqrt{2} & \sqrt{2} & 0 & 0 \\
0 & 2 & -\sqrt{2} & -\sqrt{2} & 0 & 0 \\
0 & 0 & 0 & 0 & 0 & 0 \\
0 & 0 & 0 & 0 & 0 & 0 \\
0 & 0 & 0 & 0 & 0 & 0 \\
0 & 0 & 0 & 0 & 4 & 4 \\
\end{bmatrix} $.
\end{center}
Thus $(A^2)^\ast \neq (A^2)^\dagger $, and so, $A^2$ is not a partial isometry. Moreover, we have
\begin{center}
$(A^3)^\ast = \dfrac{1}{4} \begin{bmatrix}
0 & \sqrt{2} & -1 & -1 & 0 & 0 \\
0 & -\sqrt{2} & 1 & 1 & 0 & 0 \\
0 & 0 & 0 & 0 & 0 & 0 \\
0 & 0 & 0 & 0 & 0 & 0 \\
0 & 0 & 0 & 0 & 0 & 0 \\
0 & 0 & 0 & 0 & \sqrt{2} & \sqrt{2} \\
\end{bmatrix} $, and
\end{center}
\begin{center}
$(A^3)^\dagger = \dfrac{1}{2}\begin{bmatrix}
0 & \sqrt{2} & -1 & -1 & 0 & 0 \\
0 & -\sqrt{2} & 1 & 1 & 0 & 0 \\
0 & 0 & 0 & 0 & 0 & 0 \\
0 & 0 & 0 & 0 & 0 & 0 \\
0 & 0 & 0 & 0 & 0 & 0 \\
0 & 0 & 0 & 0 & 2 \sqrt{2} & 2 \sqrt{2} \\
\end{bmatrix} $.
\end{center}
Hence $(A^3)^\ast \neq (A^3)^\dagger $, and so, $A^3$ is not a partial isometry. Also, we have
\begin{center}
$ A^D = \begin{bmatrix}
0 & 0 & 0 & 0 & 0 & 0 \\
\sqrt{2} & -\sqrt{2} & 0 & 0 & 0 & 0 \\
-1 & 1 & 0 & 0 & 0 & 0 \\
-1 & 1 & 0 & 0 & 0 & 0 \\
0 & 0 & 0 & 0 & 0 & \sqrt{2} \\
0 & 0 & 0 & 0 & 0 & \sqrt{2} \\
\end{bmatrix} $,
\end{center}
\begin{center}
$ A^{\scriptsize\textcircled{$\dagger$}} =\dfrac{1}{4} \begin{bmatrix}
0 & 0 & 0 & 0 & 0 & 0 \\
0 & -2 \sqrt{2} & 2 & 2 & 0 & 0 \\
0 & 2 & -\sqrt{2} & -\sqrt{2} & 0 & 0 \\
0 & 2 & -\sqrt{2} & -\sqrt{2} & 0 & 0 \\
0 & 0 & 0 & 0 & 2 \sqrt{2} & 2 \sqrt{2} \\
0 & 0 & 0 & 0 & 2 \sqrt{2} & 2 \sqrt{2} \\
\end{bmatrix}.$
\end{center}
Therefore, $ A^D \neq A^{\scriptsize\textcircled{$\dagger$}} $, and so, by Theorem \ref{CEPD equivalent}$(vi)$, $A$ is not CEPD.
\end{Ex}

%%%%%%%%%%%%%%%%%%%%%%%%%%%%%%%%%%%%%%%%%%%%%%%%%%%%%%%%%%%
%%%%%%%%%%%%%%%%%%%%%%%%%%%%%%%%%%%%%%%%%%%%%%%%%%%%%%%%%%%

\noindent
\section{Some applications}

In this section, we are going to state some applications of the CEPD matrices and partial isometries in solving systems of linear equations.

\begin{Prop}
Let $ A\in\mathbb{M}_n(\mathbb{C})$ be a CEPD matrix and $b\in\mathbb{M}_{n\times 1}(\mathbb{C})$. Then the linear system
\begin{center}
$A(x-A^{\scriptsize\textcircled{$\dagger$}} b) = 0$
\end{center}
is consistent and its general solution is
\begin{center}
$x=A^{c \dagger} b + (I-A^\dagger A) y$,
\end{center}
where $y\in \mathbb{M}_{n\times 1}(\mathbb{C})$ is arbitrary.
\end{Prop}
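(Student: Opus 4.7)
The plan is to reduce both claims---consistency and the description of the solution set---to a single matrix identity, namely $AA^{c\dagger}=AA^{\scriptsize\textcircled{$\dagger$}}$, which will hold thanks to the CEPD hypothesis.

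First, I would rewrite the system as $Ax=AA^{\scriptsize\textcircled{$\dagger$}}b$. Consistency is then immediate, because $x_{0}=A^{\scriptsize\textcircled{$\dagger$}}b$ solves it trivially: $A(x_{0}-A^{\scriptsize\textcircled{$\dagger$}}b)=A\cdot 0=0$. Any solution differs from $x_{0}$ by an element of $\ker A$, and since $A^{\dagger}A$ is the orthogonal projector onto $\mathcal{R}(A^{\ast})=\ker(A)^{\perp}$, one has the standard parametric description $\ker A=\{(I-A^{\dagger}A)y:y\in\mathbb{M}_{n\times 1}(\mathbb{C})\}$. Hence every solution takes the form $x=A^{\scriptsize\textcircled{$\dagger$}}b+(I-A^{\dagger}A)y$.

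Second, to recover the announced expression $x=A^{c\dagger}b+(I-A^{\dagger}A)y$, it suffices to check that $A^{c\dagger}b+(I-A^{\dagger}A)y$ actually solves the system and that $A^{c\dagger}b-A^{\scriptsize\textcircled{$\dagger$}}b\in\ker A$; both conclusions reduce to the matrix identity $AA^{c\dagger}=AA^{\scriptsize\textcircled{$\dagger$}}$. The CMP definition recalled in Section~1 gives $AA^{c\dagger}=A_{1}A^{\dagger}$, and substituting $A_{1}=AA^{D}A$ together with $AA^{\dagger}A=A$ rewrites this as $AA^{D}\cdot AA^{\dagger}$.

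Third, the CEPD assumption enters through Theorem \ref{CEPD equivalent}$(vi)$, which yields $A^{D}=A^{\scriptsize\textcircled{$\dagger$}}$; hence Lemma \ref{CEP 1} parts $(ii)$ and $(ix)$ identify $AA^{D}=AA^{\scriptsize\textcircled{$\dagger$}}=A^{k}(A^{k})^{\dagger}$ as the \emph{orthogonal} projector onto $\mathcal{R}(A^{k})$. Since $\mathcal{R}(A^{k})\subseteq\mathcal{R}(A)$ always holds and $AA^{\dagger}$ is the orthogonal projector onto $\mathcal{R}(A)$, multiplying $AA^{D}$ on the right by $AA^{\dagger}$ leaves it unchanged, i.e.\ $AA^{D}\cdot AA^{\dagger}=AA^{D}$. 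This gives $AA^{c\dagger}=AA^{D}=AA^{\scriptsize\textcircled{$\dagger$}}$, so the two forms of the general solution agree.

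The main obstacle is the third step: without the CEPD hypothesis, $AA^{D}$ is only an oblique projector and the clean composition with $AA^{\dagger}$ need not simplify. It is precisely the Hermiticity of $AA^{\scriptsize\textcircled{$\dagger$}}$ (Lemma \ref{CEP 1}$(ii)$), combined with $A^{D}=A^{\scriptsize\textcircled{$\dagger$}}$, that forces $AA^{D}$ to be an orthogonal projector and makes the range containment $\mathcal{R}(A^{k})\subseteq\mathcal{R}(A)$ suffice; once that identity is established, the rest is bookkeeping.
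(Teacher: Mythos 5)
Your argument is correct, but it takes a different route from the paper. The paper first proves the stronger identity $A^{c\dagger}=A^\dagger A A^{\scriptsize\textcircled{$\dagger$}}$ (i.e.\ that for a CEPD matrix the CMP inverse coincides with the MPCEP inverse), using exactly the ingredients you isolate ($A^D=A^{\scriptsize\textcircled{$\dagger$}}$ from Theorem \ref{CEPD equivalent}$(vi)$ and the Hermiticity of $AA^{\scriptsize\textcircled{$\dagger$}}$ and $AA^\dagger$), and then outsources the entire linear-system part to the known result \cite[Corollary 9.1]{MKS} on the MPCEP inverse. You instead stay self-contained: you only need the weaker consequence $AA^{c\dagger}=AA^{\scriptsize\textcircled{$\dagger$}}$, obtained from $AA^{c\dagger}=A_1A^\dagger=AA^D\,AA^\dagger$ and the absorption of the projector $AA^\dagger$, and you then solve $Ax=AA^{\scriptsize\textcircled{$\dagger$}}b$ by hand via the standard parametrization $\ker A=\mathcal{R}(I-A^\dagger A)$. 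What the paper's route buys is brevity (given the cited literature) plus an identity of independent interest; what yours buys is a proof readable without \cite{MKS}. One small polish: the step ``multiplying $AA^D$ on the right by $AA^\dagger$ leaves it unchanged'' is not the naive left-absorption coming from $\mathcal{R}(A^k)\subseteq\mathcal{R}(A)$; it needs the Hermiticity of both projectors, which you do acknowledge, and which can be made explicit in one line:
\begin{equation*}
AA^D\,AA^\dagger=\bigl(AA^\dagger\,AA^D\bigr)^\ast=\bigl(AA^D\bigr)^\ast=AA^D,
\end{equation*}
using $AA^\dagger A=A$ and the fact that $AA^D=AA^{\scriptsize\textcircled{$\dagger$}}$ is Hermitian under the CEPD hypothesis. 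With that line inserted, your proof is complete and correct.
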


\begin{proof}
At first we show that $A^{c \dagger} = A^\dagger A A^{\scriptsize\textcircled{$\dagger$}}$. For this,
since $A$ is CEPD, it follows, by Theorem \ref{CEPD equivalent}($(i)\Leftrightarrow(vi)$), that
$A^D = A^{\scriptsize\textcircled{$\dagger$}}$. We know, by Lemma \ref{CEP 1}$(ii)$ and (\ref{MP}), that $A A^{\scriptsize\textcircled{$\dagger$}}$
and $A A^\dagger$ are Hermitian matrices. So, by Lemma \ref{DMP 1}$(iii)$, we have
\begin{eqnarray*}
A^{c \dagger} & = & A^\dagger A A^D A A^\dagger \\
& = & A^\dagger A A^{\scriptsize\textcircled{$\dagger$}} A A^\dagger \\
& = & A^\dagger (A A^{\scriptsize\textcircled{$\dagger$}})^\ast (A A^\dagger)^\ast \\
& = & A^\dagger (A A^\dagger A A^{\scriptsize\textcircled{$\dagger$}})^\ast \\
& = & A^\dagger (A A^{\scriptsize\textcircled{$\dagger$}})^\ast \\
& = & A^\dagger A A^{\scriptsize\textcircled{$\dagger$}}.
\end{eqnarray*}

Now, the result follows from Lemma \ref{CEP 1}($ix$) and \cite[Corollary 9.1]{MKS}.
\end{proof}

 In numerical analysis, in order to solve a linear system $Ax=b$, it is usually considered a preconditioner to
accelerate iterative methods. That is, nonsingular matrices $P$ such that $P^{-1}A$ is close to the identity matrix and so the alternative
linear system $P^{-1}(Ax-b)=0$ is solved (provided that it has the same solution as the original one). When $A$ is singular, the generalized
inverses can be considered as a starting point to do this job. In what follows, we consider the situation $A^D(Ax-b)=0$.

\begin{Prop}
Let $ A\in\mathbb{M}_n(\mathbb{C})$ be a CEPD matrix and $b\in\mathbb{M}_{n\times 1}(\mathbb{C})$. Then the linear system
\begin{center}
$A^D (A x - b) = 0$
\end{center}
is consistent and its general solution is
\begin{center}
$x = A^D b + (I - A^D A) y$,
\end{center}
where $y\in\mathbb{M}_{n\times 1}(\mathbb{C})$ is arbitrary.
\end{Prop}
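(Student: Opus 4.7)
The plan is to handle this along the same lines as the previous proposition, but with $A^D$ playing the role that $A^{\scriptsize\textcircled{$\dagger$}}$ played there. The key observation is that the condition $A^D(Ax-b)=0$ is equivalent to the linear equation $A^D A\, x = A^D b$, and the matrix $A^D A$ is an idempotent that commutes with $A^D$. Thus the problem reduces to describing the solution set of a consistent idempotent equation, for which there is a well-known closed form.

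First I would check consistency by exhibiting a particular solution. Taking $x_0 := A^D b$ and using $A^D A A^D = A^D$ (which is a direct consequence of the defining identities (\ref{D})), I compute
\begin{equation*}
A^D(A x_0 - b) = A^D A A^D b - A^D b = A^D b - A^D b = 0,
\end{equation*}
so the system is consistent. Next I would record that $M := A^D A$ is idempotent, since $M^2 = A^D A A^D A = A^D A = M$, and that $M x_0 = A^D A A^D b = A^D b = x_0$ is in the range of $M$. Hence the general solution of $M x = A^D b$ equals $x_0$ plus the kernel of $M$, and because $M$ is an idempotent, $\ker M = \mathcal{R}(I - M) = \mathcal{R}(I - A^D A)$.

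Assembling these pieces, I would then verify both inclusions explicitly. For any $y\in\mathbb{M}_{n\times 1}(\mathbb{C})$, if $x = A^D b + (I - A^D A)y$, then $A^D A x = A^D b + (A^D A - A^D A)y = A^D b$, so $A^D(Ax - b) = 0$. Conversely, if $A^D(Ax - b) = 0$, then $A^D A x = A^D b$, and writing $x = A^D b + (x - A^D b)$ I would check that $(I - A^D A)(x - A^D b) = x - A^D b$, which shows $x - A^D b$ lies in $\mathcal{R}(I - A^D A)$; thus $x$ has the asserted form.

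I do not expect a serious obstacle here: every step rests on the standard Drazin identity $A^D A A^D = A^D$ and the commutation $A A^D = A^D A$, both of which are in (\ref{D}). The mild subtlety is simply making sure that consistency and the parametrization of the solution set are argued in both directions. It is worth noting that the CEPD hypothesis is not actually invoked in the derivation; its role in the statement is presumably to tie this system to the previous proposition via the identification $A^D = A^{\scriptsize\textcircled{$\dagger$}}$ from Theorem \ref{CEPD equivalent}$((i)\Leftrightarrow(vi))$, so that the two preconditioned systems have comparable particular solutions.
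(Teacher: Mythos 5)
Your argument is correct, but it is genuinely different from the paper's. The paper proves this proposition in one line, by citing \cite[Corollary 4.1]{MSM} (a result about the analogous system driven by the core EP inverse) together with Theorem \ref{CEPD equivalent}$((i)\Leftrightarrow(vi))$, so in the paper the CEPD hypothesis is used exactly to replace $A^{\scriptsize\textcircled{$\dagger$}}$ by $A^D$ and import the external solvability statement. You instead verify everything from scratch: $x_0=A^Db$ is a particular solution because $A^DAA^D=A^D$, the matrix $M=A^DA$ is idempotent, and $\ker M=\mathcal{R}(I-M)=\mathcal{R}(I-A^DA)$, which yields both inclusions of the solution set of $A^DAx=A^Db$. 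Your route is elementary and self-contained, and your closing remark is accurate: the CEPD assumption is never used in the direct verification, so the literal statement holds for every square matrix; its presence in the paper reflects the citation-based proof and the intent to compare this preconditioned system with the core EP one, where $A^D=A^{\scriptsize\textcircled{$\dagger$}}$ makes the particular solutions coincide. What the paper's approach buys is brevity and an explicit link to the core EP literature; what yours buys is transparency and a slightly stronger (hypothesis-free) conclusion.
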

\begin{proof}
By \cite[Corollary 4.1]{MSM} and Theorem \ref{CEPD equivalent}($(i)\Leftrightarrow(vi)$), the result holds.
\end{proof}

\begin{Prop}
Let $ A\in\mathbb{M}_n(\mathbb{C})$ be a CEPD matrix and $b\in\mathbb{M}_{n\times 1}(\mathbb{C})$. Then
the linear system
\begin{center}
$A x = b $
\end{center}
has a unique solution in $\mathcal{R}(A^{\scriptsize\textcircled{$\dagger$}})$ given by
\begin{center}
$ x = A^{\scriptsize\textcircled{$\dagger$}} b$.
\end{center}
\end{Prop}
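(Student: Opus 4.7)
The plan is to reduce the statement to a classical fact about the Drazin inverse by exploiting the CEPD hypothesis. By Theorem \ref{CEPD equivalent}, items $(i)$ and $(vi)$, being CEPD is equivalent to $A^{\scriptsize\textcircled{$\dagger$}} = A^D$, and hence $\mathcal{R}(A^{\scriptsize\textcircled{$\dagger$}}) = \mathcal{R}(A^D) = \mathcal{R}(A^k)$. This collapses the proposition to the well-known fact that the restriction of $A$ to $\mathcal{R}(A^k)$ is a bijection of $\mathcal{R}(A^k)$ onto itself, with inverse given by $A^D|_{\mathcal{R}(A^k)}$.

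For existence, I would set $x_0 := A^{\scriptsize\textcircled{$\dagger$}} b$, which trivially belongs to $\mathcal{R}(A^{\scriptsize\textcircled{$\dagger$}})$. To verify $A x_0 = b$, I would invoke Lemma \ref{CEP 1}$(ix)$, which gives $A A^{\scriptsize\textcircled{$\dagger$}} = A^k (A^k)^{\dagger}$, the orthogonal projector onto $\mathcal{R}(A^k) = \mathcal{R}(A^{\scriptsize\textcircled{$\dagger$}})$. The very assertion that a solution lies in $\mathcal{R}(A^{\scriptsize\textcircled{$\dagger$}})$ forces the implicit consistency condition $b \in \mathcal{R}(A^k)$, on which this projector acts as the identity, yielding $A A^{\scriptsize\textcircled{$\dagger$}} b = b$.

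For uniqueness, I would take two candidates $x_1, x_2 \in \mathcal{R}(A^{\scriptsize\textcircled{$\dagger$}})$ with $A x_i = b$ and set $z := x_1 - x_2$, so that $A z = 0$. Since $z \in \mathcal{R}(A^{\scriptsize\textcircled{$\dagger$}}) = \mathcal{R}(A^D)$, write $z = A^D u$ for some $u$; then by (\ref{D}),
$$A^D A z = A^D A A^D u = A^D u = z.$$
Applying $A^D$ to $A z = 0$ therefore gives $z = 0$, and thus $x_1 = x_2$.

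The main (and essentially only) obstacle is the bookkeeping around consistency: one must observe that requiring a solution to lie in $\mathcal{R}(A^{\scriptsize\textcircled{$\dagger$}}) = \mathcal{R}(A^k)$ automatically requires $b \in \mathcal{R}(A^k)$, because $A\bigl(\mathcal{R}(A^k)\bigr) = \mathcal{R}(A^{k+1}) = \mathcal{R}(A^k)$. Once this is acknowledged, the CEPD reduction $A^{\scriptsize\textcircled{$\dagger$}} = A^D$ together with the standard fact $A^D A|_{\mathcal{R}(A^D)} = I|_{\mathcal{R}(A^D)}$ delivers both existence and uniqueness with no further work.
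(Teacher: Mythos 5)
Your proof is correct and follows essentially the same route as the paper: reduce to the Drazin inverse via Theorem \ref{CEPD equivalent}$((i)\Leftrightarrow(vi))$ and the range identity $\mathcal{R}(A^{\scriptsize\textcircled{$\dagger$}})=\mathcal{R}(A^k)$, the only difference being that the paper simply cites the classical restricted-system result for $A^D$ from \cite[p. 167]{BG}, whereas you re-prove it directly (existence via $AA^{\scriptsize\textcircled{$\dagger$}}=A^k(A^k)^\dagger$ acting as the identity on $\mathcal{R}(A^k)$, uniqueness via injectivity of $A$ on $\mathcal{R}(A^k)$). Your explicit remark that consistency forces $b\in\mathcal{R}(A^k)$ is exactly the hypothesis built into the cited Ben-Israel--Greville result and left tacit in the proposition's statement, so flagging it is a point in your favor rather than a gap.
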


\begin{proof}
Let ${\rm ind}(A) = k$. By (\ref{CEP}), we see that $\mathcal{R}(A^k) = \mathcal{R}(A^{\scriptsize\textcircled{$\dagger$}})$.
Now, the result follows from \cite[p. 167]{BG} and Theorem \ref{CEPD equivalent}($(i)\Leftrightarrow(vi)$).
\end{proof}

To see some applications of partial isometries in solving systems of linear equations, consider the next two propositions.

\begin{Prop}
Let $ A\in\mathbb{M}_n(\mathbb{C})$ be a partial isometry and $b\in \mathcal{R}(A)$. Then
\begin{center}
$x = A^\ast b$
\end{center}
is a solution of the linear system
\begin{center}
$A x = b$.
\end{center}
\end{Prop}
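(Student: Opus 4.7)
The plan is very short because the proposition follows almost immediately from the defining identity of a partial isometry recalled at the start of Section~4, namely $AA^\ast A=A$ (equivalently $A^\dagger=A^\ast$).

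First I would unpack the hypothesis $b\in\mathcal{R}(A)$: this gives some $z\in\mathbb{M}_{n\times 1}(\mathbb{C})$ with $b=Az$. Next, I would substitute the candidate $x=A^\ast b$ into the left-hand side of the system and compute
\[
Ax \;=\; A A^\ast b \;=\; A A^\ast (Az) \;=\; (A A^\ast A)\, z \;=\; A z \;=\; b,
\]
where the fourth equality uses the partial isometry identity $AA^\ast A=A$. This verifies that $x=A^\ast b$ satisfies $Ax=b$.

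There is essentially no obstacle: the only subtlety is recognizing that the hypothesis $b\in\mathcal{R}(A)$ is exactly what is needed to write $b=Az$ and then collapse $AA^\ast A$ to $A$. As a sanity check, one can also view this through the Moore--Penrose identity: since $A^\dagger=A^\ast$ for a partial isometry, $A^\ast b = A^\dagger b$, and $A A^\dagger b = b$ holds precisely when $b\in\mathcal{R}(A)=\mathcal{R}(AA^\dagger)$, which is the stated hypothesis. Either viewpoint yields a one-line verification, so no case analysis or further lemmas are required.
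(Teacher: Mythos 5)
Your proof is correct and is essentially the paper's argument: the paper simply cites the standard consistency fact that $x=A^\dagger b$ solves $Ax=b$ when $b\in\mathcal{R}(A)$ and then uses $A^\dagger=A^\ast$, which is exactly your Moore--Penrose ``sanity check'' viewpoint. Your direct computation $AA^\ast Az=Az$ just inlines that cited fact, so nothing is missing.
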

\begin{proof}
By \cite[Theorem 2.3.1($(i)\Leftrightarrow(ii)$)]{MBM} and the fact that $A^\dagger = A^\ast$, the result holds.
\end{proof}

For partial isometry $ A\in\mathbb{M}_n(\mathbb{C})$, it is known, by \cite[Def. 2.1]{D}, that $A^{D,\ast} $ is an outer inverse of $A$. So,
by \cite[Theorem 4.2]{D}, we have the following result.

\begin{Prop}
Let $ A\in\mathbb{M}_n(\mathbb{C})$ be a partial isometry with $ind(A)= k$, and $b\in \mathcal{R}(A A^D)$.
Then the linear system
\begin{center}
$A x = b $
\end{center}
in the space $\mathcal{R}(A^k)$ has a unique solution given by
\begin{center}
$x = A^{D,\ast} b.$
\end{center}
\end{Prop}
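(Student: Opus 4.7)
The plan is to follow the hint and reduce everything to the cited result \cite[Theorem 4.2]{D}, which handles linear systems via outer inverses of prescribed range and null space. Concretely, \cite[Theorem 4.2]{D} says (in the form we need) that if $X$ is an outer inverse of $A$ with $\mathcal{R}(X)=\mathcal{R}(A^k)$ and an appropriate compatibility on $b$, then $Ax=b$ admits a unique solution in $\mathcal{R}(A^k)$, namely $x=Xb$. So the whole task reduces to \emph{certifying that the particular candidate $X:=A^{D,\ast}=A^DAA^\ast$ fits the hypotheses of that theorem} when $A$ is a partial isometry and $b\in\mathcal{R}(AA^D)$.

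First I would verify that $A^{D,\ast}$ is an outer inverse of $A$, i.e., $A^{D,\ast}AA^{D,\ast}=A^{D,\ast}$. Since $A$ is a partial isometry, $A^\dagger=A^\ast$, and Theorem \ref{Drazin- Star} furnishes the key identity $A^DA^\ast A=A^D$. Using $AA^D=A^DA$ together with this identity, the product $A^DAA^\ast\cdot A\cdot A^DAA^\ast$ collapses step by step to $A^DAA^\ast=A^{D,\ast}$; this is essentially the content of the footnote to \cite[Def.~2.1]{D} that the excerpt invokes.

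Next I would pin down the range of $A^{D,\ast}$. From $A^{D,\ast}=A^DAA^\ast$ one has $\mathcal{R}(A^{D,\ast})\subseteq\mathcal{R}(A^D)=\mathcal{R}(A^k)$, while the outer inverse identity $A^{D,\ast}AA^{D,\ast}=A^{D,\ast}$ together with $A^DA^\ast A=A^D$ gives the reverse inclusion, so $\mathcal{R}(A^{D,\ast})=\mathcal{R}(A^k)$. Thus $A^{D,\ast}$ is an outer inverse with range exactly the prescribed solution space $\mathcal{R}(A^k)$. Combined with the compatibility hypothesis $b\in\mathcal{R}(AA^D)=\mathcal{R}(A^{k+1})$ (which is the consistency condition needed in \cite[Theorem 4.2]{D}), the cited theorem immediately yields existence and uniqueness of the solution and gives the formula $x=A^{D,\ast}b$.

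The main obstacle, such as it is, is purely bookkeeping: matching the range/null-space hypotheses of \cite[Theorem 4.2]{D} to the present setting. The algebraic work boils down to the single identity $A^DA^\ast A=A^D$ from Theorem \ref{Drazin- Star}, which is what makes $A^{D,\ast}$ behave like a ``Drazin-like'' outer inverse precisely for partial isometries; once that is in hand, no further computation is required beyond quoting \cite[Theorem 4.2]{D}.
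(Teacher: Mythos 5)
Your proposal is correct and takes essentially the same route as the paper: the paper's argument is exactly that for a partial isometry $A^{D,\ast}=A^D A A^\ast$ is an outer inverse of $A$ (via \cite[Def. 2.1]{D}, since $(A^\dagger)^\ast=A$ when $A^\dagger=A^\ast$), and then the conclusion follows from \cite[Theorem 4.2]{D}. Your explicit verifications of the outer-inverse identity and of $\mathcal{R}(A^{D,\ast})=\mathcal{R}(A^k)$, together with $b\in\mathcal{R}(AA^D)=A(\mathcal{R}(A^k))$, merely spell out the hypotheses that the paper delegates to the cited results.
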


%%%%%%%%%%%%%%%%%%%%%%%%%%%%%%%%%%%%%%%%%%%%%%%%%%%%%%%%%%%%
%%%%%%%%%%%%%%%%%%%%%%%%%%%%%%%%%%%%%%%%%%%%%%%%%%%%%%%%%%%%

\noindent
\section{Conclusion}

 According to the important applications of the Drazin inverse and the core EP inverse in various areas of applied sciences,
in the present paper, the class of matrices $A$ for which the Drazin inverse $A^D$ is equal to the core EP inverse $A^{\scriptsize\textcircled{$\dagger$}}$, entitled CEPD matrices, has been considered.
Also, several algebraic properties of these matrices have been investigated. Moreover, partial isometries and also the Drazin inverse, the core EP inverse, the MPD inverse, the DMP inverse of these matrices have been studied. In addition,
some results for CEPD matrices when their core EP inverse is a partial isometry have been obtained. Some conditions for which
partial isometries are CEPD have also been given.
Moreover, some applications of CEPD matrices and partial isometries in solving systems of linear equations have been presented.
Also, to illustrate the main results, some numerical examples have been provided.

 %%%%%%%%%%%%%%%%%%%%%%%%%%%%%%%%%%%%%%%%%%%%%%%%%%%%%%%%%%%%%%%
%%%%%%%%%%%%%%%%%%%%%%%%%%%%%%%%%%%%%%%%%%%%%%%%%%%%%%%%

\vspace{0.5cm} \noindent

 {\bf Acknowledgements}

 The third author is supported by the Ministry of Education, Science and Technological Development,
Republic of Serbia [grant number 451-03-65/2024-03/200124].
The fourth author was partially supported by Ministerio de Ciencia e Innovaci\'on of Spain [grant RED2022-134176-T],
by Universidad Nacional de La Pampa, Facultad de Ingenier\'{\i}a, Argentina [grant number Resol. Nro. 172/2024],
by Universidad Nacional de R\'{\i}o Cuarto, Argentina [grant number 449/2024] and also by Universidad Nacional del Sur,
Argentina [grant number 24/ZL22]. The authors are very grateful to the anonymous referees for helpful comments and useful suggestions.

%%%%%%%%%%%%%%%%%%%%%%%%%%%%%%%%%%%%%%%%%
%%%%%%%%%%%%%%%%%%%%%%%%%%%%%%%%%%%%%%%%%%%%%%%%%%%%%%%

\end{document}